\newcommand{\R}{{\mathbb R}}
\newcommand{\Z}{{\mathbb Z}}
\newcommand{\T}{{\mathbb T}}
\newcommand{\cA}{{\mathcal A}}
\newcommand{\cB}{{\mathcal B}}
\newcommand{\cD}{{\mathcal D}}
\newcommand{\cJ}{{\mathcal J}}
\newcommand{\cF}{{\mathcal F}}
\newcommand{\cL}{{\mathcal L}}
\newcommand{\p}{\partial}
\newcommand{\norm}[1]{\Vert #1 \Vert}
\newcommand{\vab}[1]{\vert #1 \vert}
\begin{document}
\title[\hfilneg EJDE\hfil [Fractional Schr\"{o}dinger Equation on cylinders]
{$\text{L}^2$ Solutions for Cubic NLS Equation with higher order Fractional Elliptic/Hyperbolic Operators on $\R\times\T$ and $\R^2$}

\author[A. J. Corcho and L. P. Mallqui \hfil EJDE\hfilneg]
{Ad\'an J. Corcho and Lindolfo P. Mallqui}

\address{Ad\'an J. Corcho \newline
Universidad de C\'ordoba - UCO,
Departamento de Matem\'aticas,
Campus de Rabanales.
14071, C\'ordoba, Spain.}
\email{a.corcho@uco.es}

\address{Lindolfo P. Mallqui \newline
Universidade Federal do Rio de Janeiro - UFRJ,
Instituto de Matem\'atica.
21941-909, Rio de Janeiro - RJ, Brazil}
\email{lindolfciencias@gmail.com}

\subjclass[2020]{35Q55, 35Q35, 35Q60}
\keywords{Elliptic/Hyperbolic cubic nonlinear Schr\"odinger equation; 
\hfill\break\indent Cauchy problem; Well-poseddness}

\begin{abstract}
In this work, we consider the Cauchy problem for the cubic Schr\"o\-dinger equation posed on cylinder $\R\times\T$ with fractional derivatives  $(-\p_y^2)^{\alpha},\, \alpha >0$, in the periodic direction. The spatial operator includes elliptic and hyperbolic regimes. We prove $L^2$ global well-posedness results  when  $\alpha \ge 1$ by proving a $L^4 - L^2$  Strichartz inequality for the linear equation, following the ideas in \cite{Takaoka-Tzvetkov}, where it was considered the elliptical case with $\alpha=1$. Further, these results remain valid on the Euclidean environment  $\R^2$, so well-posedness in $L^2$  are also achieved in this case. Our proof in the elliptic (hyperbolic) case does not work in the small directional dispersion case $0<\alpha <1$ ($0<\alpha \leq 1$), respectively. 
\end{abstract}

\maketitle
\numberwithin{equation}{section}
\newtheorem{theorem}{Theorem}[section]
\newtheorem{lemma}[theorem]{Lemma}
\newtheorem{remark}[theorem]{Remark}
\newtheorem{proposition}[theorem]{Proposition}
\allowdisplaybreaks

\section{Introduction}
We consider the Initial Value Problem (IVP) associated with the cubic non\-linear Schr\"odinger equation (3-NLS) on cylinder $\R\times \T$  with higher fractional derivatives  in the periodic direction. More precisely, we will study the IVP:
\begin{equation}\label{F-CNLS}
	\begin{cases}
		i\partial_tu + \cL^{\pm}_{\alpha}u = \varepsilon|u|^2u,&(x,y)\in\R\times\T,~~ t\in\R, \medskip \\
		u(0; x,y)=\phi(x,y),&(x,y)\in \R\times\T,
	\end{cases}
\end{equation}
where $\varepsilon\in \R^*$,  $u=u(t;x,y)$ is a complex-valued function on $\R\times\R\times \T$, with $\T=\R/2\pi\Z$, and $\cL^{\pm}_{\alpha}$ denotes the pseudo-differential operator acting in the space variables, given by  
\begin{equation}\label{CNLS-L-operator}
	\cL^{\pm}_{\alpha}:=\p^2_x \mp (-\p^2_y)^{\alpha},\quad \text{with}\; \alpha > 0, 
\end{equation}
defined in Fourier variables  by
\begin{equation}\label{CNLS-L-operator-symbol}
	\widehat{\cL^{\pm}_{\alpha}f}(\xi, n)= -(\xi^2 \pm |n|^{2\alpha})\widehat{f}^{x,y}(\xi, n),
\end{equation}
where $\widehat{f}^{x,y}$ is the Fourier transform of $f$ on the cylinder $\R \times \T$, that is,
\begin{equation}\label{FourierTransf}
	\cF[f](\xi, n)=\widehat{f}^{x,y}(\xi, n):=\frac{1}{2\pi}\int_{-\infty}^{\infty}\int_0^{2\pi}f(x,y)e^{-i(x\xi + ny)}dxdy.
\end{equation}
So, the linear propagator  $U^{\pm}_{\alpha}$ of \eqref{F-CNLS} is given by 
\begin{equation}\label{F-Group}
	U_{\alpha}^{\pm}(t)\phi=e^{it\,\cL^{\pm}_{\alpha}}\phi = \cF^{-1}\big(e^{-it(\xi^2 \pm |n|^{2\alpha})}\cF[\phi]\big).
\end{equation}

The cases $\varepsilon<0$ and $\varepsilon>0$ are known as the {\it focusing} and {\it defocusing} regimes, respectively. Also, note that in the case $\alpha =1$,  
$$
\cL^{\pm}_1=
\begin{cases}
	\p^2_x + \p^2_y =:\Delta \; \text{(Laplacian  operator) in the case (+)},\medskip\\
	\p^2_x - \p^2_y =:\Box \; \text{(wave  operator) in the case (-)}.
\end{cases}
$$

When $\alpha=1$, the elliptic case appears as a model in several physical problems (see for example the references \cite{Newell,Scott-Chu-McLaughlin,Zakharov-Shabat}). On the other hand, in the hyperbolic case it describes, for instance, the gravity waves on liquid surface and ion-cyclotron waves in plasma (see \cite{Crawford-Saffman-Yuen, Kuznetsov-Turitsyn, Sulem-Sulem}). Both cases (elliptic/hyperbolic) of the IVP \eqref{F-CNLS} with $\alpha >0$, posed on the Euclidean domain $\R^2$, are covered in the study carried out in \cite{DeBouard} about the existence of analytic solutions. In the case $0< \alpha < 1$, the elliptic operators $\cL^{+}_{\alpha}$ (fractional directional Laplacian) in \eqref{CNLS-L-operator} appear in the context of parabolic equations (see \cite{Barles-2012, Barles-2014} and references therein) as toy models to describe local diffusion occurring only in the $x$ direction, while non-local diffusion occurs in the $y$-direction. Moreover, the defocusing elliptic case of the IVP  \eqref{F-CNLS} with $\alpha= 1/2$ (called Half-wave-Schrödinger equation),  defined on the cylinder $\R\times \T$,  was considered in \cite{Xu} to establish a modified scattering theory between small solutions to this model and small solutions to the cubic Szeg\H{o} equation. In the same work, the author infers the existence of unbounded global solutions of \eqref{F-CNLS} in the anisotropic space $L^2_xH^s_y(\R\times \T)$ for every $s>1/2$. In \cite{Bahri-2020}, the authors considered the Half-wave-Schrödinger equation with a more general nonlinearity ($|u|^{p-1}u$), posed on the Euclidean domain $\R^2$, and they showed local well-posedness ($1 < p \le 5$) in anisotropic space $L^2_xH^s_y(\R^2)$ for $s>1/2$. Furthermore, they presented some results concerning the existence and orbital stability of solitary waves. Very recently, in \cite{Esfahani-2024}, the elliptic case of IVP  \eqref{F-CNLS}, posed on $\R^2$, is considered when $0 < \alpha < 1$ and with nonlinearity $|u|^{p-2}u$,\, $2 < p < 2\frac{(1+\alpha)}{1-\alpha}$. More precisely, conditions for the existence of blow-up solutions are investigated.
	
For  every $\alpha >0$, the IVP \eqref{F-CNLS}  (elliptic/hyperbolic) formally enjoys the mass conservation law:
	\begin{equation}\label{mass}
		M[u](t)=\displaystyle\int_{\R}\int_{\T}\vab{u(t;x,y)}^2dxdy= M[u](0),
	\end{equation}
	for all $t\in \R$.  Therefore, it would be interesting to establish a well-posedness theory in $L^2(\R\times \T)$ similarly to the case $\alpha =1$ (Laplacian and wave operators). The main goal of this paper is to answer this question.

\subsection{Case $\boldsymbol{\alpha =1}$ (some previous well-posedness results in isotropic Sobolev spaces)}

In the case of classical Laplacian/wave operator ($\alpha=1$)\label{Int-sub-1},
local and global well-posedness for time evolution of the flow associated to the IVP \eqref{F-CNLS}, with initial data $\phi$ belonging to the classical Sobolev spaces $H^s(\cD)$ on plane domains $\cD=\R^2,\;\T^2\; \text{or}\; \R\times \T$,
has been considered for many authors. For instance, we have the following results:
 
\begin{itemize}
\item $\cD=\R^2$: Well-posedness for in  $L^2(\R^2)$ is a consequence of the time decay estimates coming from dispersion and the Strichartz inequalities, which have the same form for the elliptic or hyperbolic linear Schr\"odinger equations. For the elliptic operator  $\cL^{+}_1$ see \cite{Cazenave-Weissler} and for the  hyperbolic case $\cL^{-}_1$  we refer to \cite{Ghidaglia-Saut-90}, where the authors deduced Strichartz inequalities for operators $e^{it\cL}$, where
$$\cL=\sum\limits_{1\le i, j \le n} a_{ij}\p_{x_i}\p_{x_j},~~ a_{ij}\in \R,$$
with non-degenerate quadratic form $A=\big(a_{ij} \big)$.  In particular, $L^2(\R^2)$ is the Sobolev critical regularity ($s=0$) for well-posedeness, which is suggested by the scale invariance of the model, that is, if $u$ is a solution of \eqref{F-CNLS} with $\alpha = 1$ and initial data $\phi(x,y)$, then 
$$u_{\lambda}(t;,x,y)= \ u(\lambda^2t; \lambda x, \lambda y),\; \lambda>0,$$
is the respective  solution of \eqref{F-CNLS} with initial data $\lambda \phi(\lambda x, \lambda y)$. 
	
\medskip 
\item $\cD=\T^2:$ Global well-posedness in $H^s(\T^2)$, $s>0$, has been proven in the elliptic case $U^+_1$ (see the works \cite{Bourgain-93, Bourgain-99}). In \cite{Wang} the hyperbolic case $U^-_1$  was considered, 
getting local well-posedness for $s>\frac12$ and ill-posedness when $s<\frac12$.
	
\medskip 
\item $\cD =\R\times \T$: The elliptical case  $\cL^{+}_1$ was treated  in \cite{Takaoka-Tzvetkov}. Specifically, the authors obtained global well-posedness for \textit{small data} in $L^2(\R\times \T)$
by proving  a $L^4 - L^2$  Strichartz inequality for the group $U^+_1(t)=e^{it\Delta}$ with  $(x,y)\in \R\times \T$. More precisely, they showed
 \begin{equation}\label{Elliptic-Strichartz}
 \|U_1^{+}(t)\phi\|_{L^4(I\times \R\times \T)}\le C_I\|\phi\|_{L^2(\R\times \T)},
 \end{equation}
where $I\subset \R$ is an interval containing $t=0$ and $C_I$ is a positive constant that depends only on $|I|$ (measure of $I$). Due the symmetry of the laplacian operator, the results are also valid 
in the space $L^2(\T\times \R)$. Similar Strichartz estimates were obtained in \cite{Herr-Tataru-Tzvetkov} for the energy critical nonlinear Schr\"odinger equation in partially periodic domains, 
for instance: $\R^m\times\T^{4-m}$ with $m=2,3$. 
\end{itemize}

In view of the above comments, a natural question is to figure out what happens for the IVP \eqref{F-CNLS} posed on domains $\R^2$, $\R\times \T$  or $\T\times \R$, with initial data in $L^2$,  for more general $\alpha>0$ as already pointed out before the beginning of this section. The notion of criticality given below tells us that we do not expect well-posedness in $L^2$ for IVP \eqref{F-CNLS} for $0< \alpha < 1$.

\subsection{Notion of criticality in isotropic Sobolev spaces for $\boldsymbol{\alpha > 0}$ }
The equation in \eqref{F-CNLS} on the domain $\R^2$ has the following scaling symmetric property: if $u$ is a solution to \eqref{F-CNLS}, $u_{\lambda}$ is also a solution to  \eqref{F-CNLS}, where 
\begin{equation}\label{alpha-scaling}
	u(t;x,y) \longmapsto u_{\lambda}(t;x,y):=\lambda u(\lambda^2t; \lambda x, \lambda^{1/\alpha}y),\; \lambda >0,
\end{equation}
which establishes a notion of criticality in Sobolev spaces  $H^s(\R^2)$ for the IVP  \eqref{F-CNLS}. More precisely, 

\begin{itemize}
	\item the index $s$ is called {\it critical} if \; $\|D^su_{\lambda}(0; \cdot, \cdot )\|_{L^2(\R^2)} \sim \|D^su(0; \cdot, \cdot )\|_{L^2(\R^2)}$,
	
	\medskip 
	\item the index $s$ is called {\it subcritical} if \; $\|D^su_{\lambda}(0; \cdot, \cdot )\|_{L^2(\R^2)} \to \infty$ as $\lambda \to  \infty$,
	
	\medskip 
	\item the index $s$ is called {\it supercritical} if \; $\|D^su_{\lambda}(0; \cdot, \cdot )\|_{L^2(\R^2)}  \to 0$ as $\lambda \to  \infty$.
\end{itemize}

\noindent
Computing $\|D^su_{\lambda}(0; \cdot, \cdot )\|_{L^2(\R^2)}$  for $\lambda>0$ we have 
\begin{equation}\label{critical-scaling}
	\|D^su_{\lambda}(0; \cdot, \cdot )\|^2_{L^2(\R^2)}= \lambda^{1-1/\alpha}\int_{\R^2}\big(\lambda^2 \xi_1^2 + \lambda^{2/\alpha}\xi_2^2\big)^s|\widehat{u}(0;\xi)|^2d\xi,
\end{equation}
with $\xi=(\xi_1, \xi_2)$. So  one gets
\begin{equation}\label{critical-scaling-a}
	\|D^su_{\lambda}(0; \cdot, \cdot )\|^2_{L^2(\R^2)}=
	\begin{cases}
		\lambda^{1-1/\alpha +2s/\alpha}p_{\alpha}(\lambda)& \text{if}\quad  0< \alpha < 1, \medskip \\
		\lambda^{2s}\|D^su(0; \cdot, \cdot )\|^2_{L^2(\R^2)}& \text{if}\quad  \alpha = 1, \medskip \\
		\lambda^{1-1/\alpha +2s}q_{\alpha}(\lambda)& \text{if}\quad  \alpha > 1,
	\end{cases}
\end{equation}
where
\begin{align*}
	&p_{\alpha}(\lambda)=\int_{\R^2}\big(\lambda^{2(1-\frac1\alpha)} \xi_1^2 + \xi_2^2\big)^s|\widehat{u}(0;\xi)|^2d\xi\\
	&q_{\alpha}(\lambda)=\int_{\R^2}\big(\xi_1^2 + \lambda^{2(\frac1\alpha -1)} \xi_2^2\big)^s|\widehat{u}(0;\xi)|^2d\xi.
\end{align*}
Hence, since 
$$\lim\limits_{\lambda\to +\infty}p_{\alpha}(\lambda)=\|D^s_yu(0; \cdot, \cdot )\|^2_{L^2(\R^2)}\, \text{and}\,  \lim\limits_{\lambda\to +\infty}q_{\alpha}(\lambda)=\|D^s_xu(0; \cdot, \cdot )\|^2_{L^2(\R^2)},$$
we conclude that
\begin{equation}\label{critical-regularity-alpha}
	s_c:=
	\begin{cases}
		\frac{1-\alpha}{2}>0& \text{is the {\it critical} regularity for}\; 0< \alpha < 1,\medskip \\
		\quad\; 0\quad &\text{is the {\it critical} regularity for}\;\, \alpha =1,\medskip \\
		\frac{1-\alpha}{2\alpha}<0& \text{is the {\it critical} regularity for}\; \alpha >1.
	\end{cases}
\end{equation}
Hence, it is not expected well-posedness in $L^2$ for $0<\alpha<1$. 

Furthermore, concerning well-posedness in Sobolev spaces to \eqref{F-CNLS} we note that some ill-posedness results (below $L^2$ regularity) for the one-dimensional cubic NLS on the line can be adapted to 
establish the same results for \eqref{F-CNLS} on the cylinder $\R\times \T$. Indeed, if we consider the following Cauchy problem
\begin{equation}\label{Ex-1}
	\begin{cases}
		i\p_tu+ \cL^{\pm}_{\alpha} u= \varepsilon|u|^2u,\qquad & (t;x,y)\in \R\times \R \times \T,  \medskip \\
		u(0; x,y)=\varphi(x),
	\end{cases}
\end{equation}
with $\varphi$ depending only on the $x$-variable and $\varphi\in H^s(\R)$, it follows that 
$\tilde{\varphi}(x, y):=\varphi(x)\in  H^s(\R\times\T)$ with
$$\|\tilde{\varphi}\|_{H^s(\R\times\T)}=\|\varphi\|_{H^s(\R)},$$
and  solutions of the IVP: 
\begin{equation}\label{Ex-2}
	\begin{cases}
		i\p_tw+ \p^2_xw=\varepsilon|w|^2w,\qquad & x\in \mathbb{R},\;\; t\in \R,\medskip \\
		w(x,0)=\varphi(x),
	\end{cases}
\end{equation}
are also solutions of \eqref{Ex-1}. Assuming the existence of local solutions, the IVP \eqref{Ex-2} is  ill-posed in the following situations:
\begin{enumerate}[(i)]
	\item $s\in (-1/2, 0)$ and  $\varepsilon< 0$. In this case, for any $\delta>0$  the uniform continuous of the flow-map
	\begin{equation}\label{flow-map-1d} 
		\Phi: u_0\in H^s(\R) \longmapsto w\in C([0,\delta];  H^s(\R))
	\end{equation}
	fails  (see \cite{KPV}).
	
	\medskip 
	\item $s\le -1/2$ and $\varepsilon \neq 0$. In this case, for any $\delta>0$  the  flow-map \eqref{flow-map-1d}
	is discontinuous everywhere in $H^s(\R)$ (norm inflation arguments, see \cite{Oh}).
\end{enumerate}
Hence, the statements (i) and (ii) imply similar ill-posedness results for negative Sobolev regularity ($s< 0$) to the IVP \eqref{Ex-1}  for all $\alpha>0$. 

In view of previous discussion  we study in this work well-posedness for IVP \eqref{Ex-1} with initial data in $L^2(\R\times\T)$ and we get global well-posedness under smallness assumption on data.  
This result remains valid in $L^2(\R^2)$, but unfortunately cannot be adapted to the case $L^2(\T\times\R)$.

\subsection{Main results}
Consider the IVP \eqref{F-CNLS}, with $\alpha \ge 1$ and initial data $\phi \in L^2(\R\times \T)$. In this work we show that it is possible to get a Strichartz estimate similar to \eqref{Elliptic-Strichartz} in the case of the group $U^{\pm}_{\alpha}$ defined in \eqref{F-Group} for $\alpha \ge 1$ in the elliptic case (+)  and 
for  $\alpha > 1$ in the hyperbolic case (-). More precisely, we will prove the following main result:

\begin{theorem}[Strichartz estimate on $I\times\R\times\T$]\label{Th-F-Strichartz}
	Let  $\alpha \ge 1$ and  $I\subset\R_t$ an interval containing $t=0$. Then, there exists a positive constant $C_{\alpha, I}$, depending only on $\alpha$ and the measure of $I$, such that 
	\begin{equation}\label{F-Strichartz}
		\norm{U^{\pm}_{\alpha}(t)\phi}_{L^4(I\times\R\times\T)}\le C_{\alpha, I}\norm{\phi}_{L^2(\R\times\T)},
	\end{equation}
	for any $\phi \in L^2(\R\times\T)$ with $\alpha \ge 1$ for  the case (+) and $\alpha  > 1$ for the case (-). 
	Moreover, there exists a positive constant $\widetilde{C}_{\alpha, I}$, depending only on $\alpha$ and the measure of $I$, such that 
	\begin{equation}\label{NH-Strichartz}
		\left\|\int_{0}^{t}U^{\pm}_{\alpha}(t-t')f(t';\cdot)dt'\right\|_{L^4(I\times\R\times\mathbb{T})}\leq \widetilde{C}_{\alpha, I}\left\| f\right\| _{L^{4/3}(I\times\R\times\mathbb{T})},
	\end{equation}
	for any $f\in L^{4/3}(I\times\R\times\mathbb{T})$.
\end{theorem}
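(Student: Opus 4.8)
The plan is to derive the homogeneous estimate \eqref{F-Strichartz} from a single uniform bound for an explicit oscillatory sum-integral, and then to deduce \eqref{NH-Strichartz} from it by the usual $TT^{*}$ duality together with the Christ-Kiselev lemma.

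\emph{Reduction to the Fourier side.} Since $\|U^{\pm}_{\alpha}(t)\phi\|_{L^4(I\times\R\times\T)}^{2}=\|(U^{\pm}_{\alpha}(t)\phi)^{2}\|_{L^2(I\times\R\times\T)}$, it suffices to bound the $L^{2}$ norm of the square. Fix a smooth, compactly supported cutoff $\chi=\chi_{I}$ with $0\le\chi\le1$, $\chi\equiv1$ on $I$, and $\operatorname{supp}\chi$ of size $\sim|I|$, so that $\widehat{\chi}$ is Schwartz; then $\|(U^{\pm}_{\alpha}(t)\phi)^{2}\|_{L^{2}(I\times\R\times\T)}\le\|\chi(t)(U^{\pm}_{\alpha}(t)\phi)^{2}\|_{L^{2}(\R_{t}\times\R\times\T)}$, and I would compute the right-hand side by Plancherel (arguing first for $\phi$ Schwartz with finitely many $y$-modes and passing to the limit). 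Writing the symbol of $\cL^{\pm}_{\alpha}$ as $-(\xi^{2}+\varepsilon|n|^{2\alpha})$, with $\varepsilon=+1$ in the case $(+)$ and $\varepsilon=-1$ in the case $(-)$, one has $\widehat{U^{\pm}_{\alpha}(t)\phi}(\xi,n)=e^{-it(\xi^{2}+\varepsilon|n|^{2\alpha})}\widehat{\phi}(\xi,n)$, and hence the space-time Fourier transform of $\chi(t)(U^{\pm}_{\alpha}(t)\phi)^{2}$ at $(\tau,\xi,k)$ equals, up to the normalization constants,
\[
\sum_{n_{1}\in\Z}\int_{\R}\widehat{\chi}\big(\tau+\Phi\big)\,\widehat{\phi}(\xi_{1},n_{1})\,\widehat{\phi}(\xi-\xi_{1},k-n_{1})\,d\xi_{1},\qquad \Phi:=\xi_{1}^{2}+(\xi-\xi_{1})^{2}+\varepsilon\, g_{k}(n_{1}),
\]
with $g_{k}(n):=|n|^{2\alpha}+|k-n|^{2\alpha}$. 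Cauchy-Schwarz in the pair $(n_{1},\xi_{1})$ bounds the square of this quantity by $\mathcal{I}(\tau,\xi,k)\cdot\big(\sum_{n_{1}}\int_{\R}|\widehat{\chi}(\tau+\Phi)|\,|\widehat{\phi}(\xi_{1},n_{1})|^{2}\,|\widehat{\phi}(\xi-\xi_{1},k-n_{1})|^{2}\,d\xi_{1}\big)$, where $\mathcal{I}(\tau,\xi,k):=\sum_{n_{1}}\int_{\R}|\widehat{\chi}(\tau+\Phi)|\,d\xi_{1}$. Once one knows $\mathcal{I}\lesssim_{\alpha,|I|}1$ uniformly in $(\tau,\xi,k)$, integrating in $\tau$ (which costs $\|\widehat{\chi}\|_{L^{1}}$), summing in $k$, and performing the $\xi$-convolution collapses the estimate to $\sum_{k}\sum_{n_{1}}\|\widehat{\phi}(\cdot,n_{1})\|_{L^{2}_{\xi}}^{2}\,\|\widehat{\phi}(\cdot,k-n_{1})\|_{L^{2}_{\xi}}^{2}=\|\phi\|_{L^{2}}^{4}$, which is \eqref{F-Strichartz}.

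\emph{The main obstacle: the uniform bound $\mathcal{I}\lesssim1$.} This is the only place where the admissible range of $\alpha$ intervenes. After the substitution $u=\xi_{1}-\tfrac{\xi}{2}$ (so $\xi_{1}^{2}+(\xi-\xi_{1})^{2}=2u^{2}+\tfrac{\xi^{2}}{2}$) and with $\beta:=\tau+\tfrac{\xi^{2}}{2}$ one has $\mathcal{I}=\sum_{n_{1}}\int_{\R}|\widehat{\chi}(\beta+2u^{2}+\varepsilon g_{k}(n_{1}))|\,du$. Because $\widehat{\chi}$ is Schwartz, an elementary one-dimensional computation gives $\int_{\R}|\widehat{\chi}(A+2u^{2})|\,du\lesssim(1+|A|)^{-1/2}$ for $A\le1$ with rapid decay for $A>1$, so $\mathcal{I}$ is controlled by $\sum_{n_{1}}(1+|\beta+\varepsilon g_{k}(n_{1})|)^{-1/2}$, restricted to the indices with $\beta+\varepsilon g_{k}(n_{1})\le1$, plus a rapidly convergent tail. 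For $\alpha\ge1$ the function $g_{k}$ is convex, with increments $\gtrsim1$ away from its minimum and growing like $\alpha\,|n|^{2\alpha-1}$ as $|n|\to\infty$; consequently the number of $n$ with $g_{k}(n)$ in an interval of length $T$ around a level $\lambda$ is $\lesssim1+T$, and near such a level the gap of $g_{k}$ is $\sim\alpha\,n_{0}^{2\alpha-1}$ with $n_{0}\sim\lambda^{1/(2\alpha)}$ the corresponding frequency. A dyadic decomposition of the sum according to the size of $|\beta+\varepsilon g_{k}(n_{1})|$ then makes the square-root singularity $(1+|A|)^{-1/2}$ cancel against the curvature of $g_{k}$ (exactly as in the $\alpha=1$ argument of \cite{Takaoka-Tzvetkov}), bounding the near-diagonal block by $\lesssim 1+n_{0}^{1-\alpha}\lesssim1$. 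I expect this counting and summation step to be the technical heart of the proof.

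\emph{Where the two regimes split, and the inhomogeneous estimate.} In the case $(+)$ one has $\beta+g_{k}(n)\to+\infty$ as $|n|\to\infty$, so the far block is negligible and the above yields $\mathcal{I}\lesssim1$ for every $\alpha\ge1$. In the case $(-)$ one has in addition $\beta-g_{k}(n)\to-\infty$, so the far block now contributes $\sum_{|n|\ \text{large}}(1+g_{k}(n))^{-1/2}\sim\sum_{|n|\ \text{large}}|n|^{-\alpha}$, which converges precisely when $\alpha>1$; this is exactly the $\alpha\ge1$ versus $\alpha>1$ dichotomy in the statement, and it shows why the method breaks down for $0<\alpha<1$ in the case $(+)$ (the near-diagonal count diverges, since the increments of $g_{k}$ shrink) and for $0<\alpha\le1$ in the case $(-)$ (the tail diverges). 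Finally, \eqref{NH-Strichartz} follows from \eqref{F-Strichartz} in the standard way: dualizing \eqref{F-Strichartz} gives $\big\|\int_{I}U^{\pm}_{\alpha}(-s)f(s)\,ds\big\|_{L^{2}}\lesssim\|f\|_{L^{4/3}(I\times\R\times\T)}$; composing with $U^{\pm}_{\alpha}(t)$ and using \eqref{F-Strichartz} once more gives $\big\|\int_{I}U^{\pm}_{\alpha}(t-s)f(s)\,ds\big\|_{L^{4}(I\times\R\times\T)}\lesssim\|f\|_{L^{4/3}(I\times\R\times\T)}$; and since $4/3<4$ the Christ-Kiselev lemma (together with a routine splitting at $s=0$) upgrades the integration domain to $\{0\le s\le t\}$, producing \eqref{NH-Strichartz}. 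All constants depend on $|I|$ only through $\|\widehat{\chi}_{I}\|_{L^{1}}$ and the width $\sim|I|^{-1}$ of $\widehat{\chi}_{I}$ that enters $\mathcal{I}$.
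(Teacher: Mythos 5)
Your proposal is correct in outline and rests on the same underlying estimate as the paper, but it organizes the argument differently. The paper never touches the oscillatory sum $\mathcal{I}$ directly: it first proves a measure bound (Lemmas \ref{medida-produto-alfa} and \ref{medida-produto-alfa-elip}) for the level sets $\{(\xi,n): C\le|(\xi-\xi_0)^2\mp\tfrac12(|n|^{2\alpha}+|n-n_0|^{2\alpha})|\le C+K\}$, upgrades this to a bilinear $L^2$ estimate for functions with dyadically localized modulation $|\tau+\xi^2\mp|n|^{2\alpha}|\sim K_j$ (Propositions \ref{Prop-bilinear-alpha} and \ref{Prop-bilinear-alpha-elip}), sums the dyadic pieces to get the embedding $X^{b,0}_{\alpha\pm}\hookrightarrow L^4$ for $b>1/2$, and only then inserts $\psi_\delta(t)U^{\pm}_{\alpha}(t)\phi$. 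Your route --- Cauchy--Schwarz against the weight $\mathcal{I}(\tau,\xi,k)$ and a uniform bound $\mathcal{I}\lesssim 1$ --- is the ``first-generation'' Bourgain-style version of the same computation from \cite{Takaoka-Tzvetkov}; the weight $(1+|\beta+\varepsilon g_k(n)|)^{-1/2}$ you obtain after the $u$-integration is exactly what the $\xi$-fibers of the paper's level sets produce, so the two proofs are powered by the same lattice count and yield the same dichotomy ($\alpha\ge1$ for $(+)$, $\alpha>1$ for $(-)$ via the tail $\sum |n|^{-\alpha}$). What the paper's organization buys is that the counting is done once, at the level of measures of explicit sets (where the $n$-summation is compared with an integral and handled by the change of variables $z=C^{1/(2\alpha)}\rho^{1/\alpha}$), and the resulting $X^{b,0}$ embedding is reusable; what yours buys is that you bypass the Bourgain-space machinery entirely. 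The treatment of \eqref{NH-Strichartz} by $TT^{*}$ plus Christ--Kiselev is the same as the paper's (which invokes the argument of Lemma 3.1 in \cite{Smith-Sogge}).

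The one place you must be careful is the step you yourself flag as the technical heart. As literally stated, the count ``the number of $n$ with $g_k(n)$ in an interval of length $T$ around $\lambda$ is $\lesssim 1+T$'' does not close the argument: feeding it into the dyadic decomposition gives $\sum_j(1+2^{j})2^{-j/2}$, which diverges. What is actually needed is the refined count $\lesssim 1+T/\mathrm{gap}$ with $\mathrm{gap}\sim\alpha\,\lambda^{1-1/(2\alpha)}$ (which you do state in the following clause), combined with the a priori truncation $2^{j}\lesssim 1+|\beta|$ forced by the constraint $A_n\le 1$; this yields $\mathcal{I}\lesssim 1+|\beta|^{1/(2\alpha)-1/2}$, which is bounded precisely when $\alpha\ge1$. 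You also need to handle the range of $n$ near the minimum of $g_k$ (at $n\approx k/2$), where the increments of $g_k$ degenerate and one must instead use the convexity bound on the number of lattice points, $\lesssim 1+\sqrt{T}$; this is where the paper's case analysis $C\ge1$ versus $0<C<1$ does the corresponding work. None of this is a flaw in the strategy, but the proposal as written asserts the key counting lemma rather than proving it, and the version of the count it does assert would not suffice.
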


\begin{remark}\label{Remark-extra-difficulty}
 We highlight that, in comparison with the case  $U^+_1(t)$ covered in \cite{Takaoka-Tzvetkov}, the study in the case  $\alpha \neq 1$ causes extra technical difficulty in the manipulation of symbol $\tau + \xi^2 - |n|^{2\alpha}$, since one cannot make use of the good algebraic  structure of quadratic polynomial in two variables corresponding to the symbol when $\alpha = 1$. Indeed,  to estimate the measure of the set $\mathcal{G}_K$ in the proof of crucial Lemma \ref{medida-produto-alfa} we had to introduce two appropriate auxiliary sets, which is not necessary in the case of  $U^+_1(t)$.
\end{remark}

As in \cite{Takaoka-Tzvetkov}, in the context of Cauchy problem for the cubic elliptic NLS, Theorem \ref{Th-F-Strichartz} combined with Picard iteration scheme
applied to the integral equation
\begin{equation}\label{int-equation}
	u(t)= U^{\pm}_{\alpha}(t)\phi -i\varepsilon\int_0^t U^{\pm}_{\alpha}(t-t')|u(t')|^2u(t')dt'
\end{equation}	
and the mass conservation law \eqref{mass} imply the following result:
\begin{theorem}[Well-posedness in $L^2$]\label{Th-Wp-L2}
	The  Cauchy problem \eqref{F-CNLS} is globally well-posed for sufficiently small data $\phi$ in $L^2(\R\times\T)$ with $\alpha \ge 1$ for  the case (+) and $\alpha  > 1$ for the case (-). 
\end{theorem}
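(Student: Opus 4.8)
I would prove Theorem \ref{Th-Wp-L2} by the classical contraction-mapping scheme applied to the Duhamel formulation \eqref{int-equation}, with Theorem \ref{Th-F-Strichartz} as the sole analytic ingredient; this is the argument of \cite{Cazenave-Weissler} in the form carried out in \cite{Takaoka-Tzvetkov}. First I would fix a bounded interval $I=[-T,T]$ with $0\in I$ (any fixed $T>0$ works) and set
\[
X_I:=C\big(I;L^2(\R\times\T)\big)\cap L^4(I\times\R\times\T),\qquad \|u\|_{X_I}:=\|u\|_{L^\infty(I;L^2)}+\|u\|_{L^4(I\times\R\times\T)},
\]
and, for $\phi\in L^2(\R\times\T)$, introduce $\Gamma_\phi(u)(t):=U^{\pm}_\alpha(t)\phi-i\nu\int_0^tU^{\pm}_\alpha(t-t')\big(|u(t')|^2u(t')\big)\,dt'$. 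The goal is to show that, provided $\|\phi\|_{L^2}$ is small enough (a smallness which for $\alpha=1$ is forced by the $L^2$-criticality recorded in \eqref{critical-regularity-alpha}), $\Gamma_\phi$ is a contraction on a small closed ball of $X_I$.

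For the linear term I would use that the symbol of $U^{\pm}_\alpha(t)$ has modulus one, so $U^{\pm}_\alpha(t)$ is unitary on $L^2$ and $\|U^{\pm}_\alpha(\cdot)\phi\|_{L^\infty(I;L^2)}=\|\phi\|_{L^2}$, while \eqref{F-Strichartz} gives $\|U^{\pm}_\alpha(\cdot)\phi\|_{L^4(I\times\R\times\T)}\le C_{\alpha,I}\|\phi\|_{L^2}$; together, $\|U^{\pm}_\alpha(\cdot)\phi\|_{X_I}\lesssim\|\phi\|_{L^2}$. For the Duhamel term, \eqref{NH-Strichartz} controls its $L^4$ norm by $\widetilde C_{\alpha,I}\|\,|u|^2u\,\|_{L^{4/3}(I\times\R\times\T)}$, and the companion estimate $\big\|\int_0^tU^{\pm}_\alpha(t-t')f\,dt'\big\|_{L^\infty(I;L^2)}\lesssim\|f\|_{L^{4/3}(I\times\R\times\T)}$ follows from \eqref{F-Strichartz} by $TT^*$ duality combined with the Christ--Kiselev lemma (legitimate since $\tfrac43<4$), which in fact reproves \eqref{NH-Strichartz} as well. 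The only nonlinear estimate needed is the trilinear Hölder bound
\[
\big\|\,|u|^2u-|v|^2v\,\big\|_{L^{4/3}(I\times\R\times\T)}\le C\big(\|u\|_{L^4(I\times\R\times\T)}^2+\|v\|_{L^4(I\times\R\times\T)}^2\big)\,\|u-v\|_{L^4(I\times\R\times\T)},
\]
which comes from the pointwise inequality $\big|\,|u|^2u-|v|^2v\,\big|\lesssim(|u|^2+|v|^2)|u-v|$ together with $\tfrac34=\tfrac12+\tfrac14=\tfrac14+\tfrac14+\tfrac14$; with $v=0$ it gives $\|\,|u|^2u\,\|_{L^{4/3}}\le C\|u\|_{L^4}^3$.

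Combining these on the ball $B_R:=\{u\in X_I:\|u\|_{X_I}\le R\}$ yields $\|\Gamma_\phi(u)\|_{X_I}\le c_0\|\phi\|_{L^2}+c_1R^3$ and $\|\Gamma_\phi(u)-\Gamma_\phi(v)\|_{X_I}\le c_1R^2\|u-v\|_{X_I}$, with $c_0,c_1>0$ depending only on $\alpha$, $|I|$ and $|\nu|$. Taking $R:=2c_0\|\phi\|_{L^2}$ and then requiring $\|\phi\|_{L^2}$ small enough that $c_1R^2\le\tfrac12$ makes $\Gamma_\phi$ map $B_R$ into itself and contract there, so by the Banach fixed point theorem it has a unique fixed point $u\in B_R$; this $u$ satisfies \eqref{int-equation}, hence solves \eqref{F-CNLS} on $I\times\R\times\T$, and since $f:=|u|^2u\in L^{4/3}(I\times\R\times\T)$ the Duhamel formula also gives $u\in C(I;L^2)$. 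Unconditional uniqueness in all of $X_I$ (not merely in $B_R$), Lipschitz --- in fact real-analytic --- dependence of $u$ on $\phi$, and persistence of extra regularity would then follow from the routine continuity/bootstrap and density arguments. The step I expect to be the only real point of care is the $C(I;L^2)$ endpoint of the inhomogeneous estimate --- the genuine time-continuity of the Duhamel term and its control by the $L^{4/3}_{t,x,y}$ norm of the nonlinearity; everything else is the classical Cazenave--Weissler/Kato argument transported essentially verbatim from \cite{Takaoka-Tzvetkov}.
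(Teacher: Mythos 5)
Your proposal is correct and is essentially the paper's own argument: the authors prove Theorem \ref{Th-Wp-L2} exactly by combining the Strichartz estimates \eqref{F-Strichartz}--\eqref{NH-Strichartz} of Theorem \ref{Th-F-Strichartz} with the Picard/contraction scheme for the integral equation \eqref{int-equation} in $C(I;L^2)\cap L^4(I\times\R\times\T)$, as in \cite{Takaoka-Tzvetkov}. Your write-up simply supplies the standard details (trilinear H\"older bound, $TT^*$/Christ--Kiselev for the $L^\infty_tL^2$ endpoint) that the paper leaves to the reference.
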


\subsection{Final Comments} Finally, we point out some remarks. 

\begin{enumerate}[(I)]
	\item From the notion of criticality given in \eqref{critical-regularity-alpha} it is natural to expect  well-posedness results for the IVP \eqref{F-CNLS} for $s > \frac{1-\alpha}{2}>0$ when $0< \alpha < 1$.
	In fact, our proof of Theorem \ref{Th-F-Strichartz} does not work in this setting. See remarks \ref{remark-alpha-small-hyp} and \ref{remark-alpha-small-elip}.
	
	\medskip 
	\item The proof of Theorem \ref{Th-F-Strichartz} also fails for the group $U^{-}_1$, where the critical regularity suggested by the scaling is $L^2$. Thus, the hyperbolic case with $\alpha =1$  remains an interesting 
	open problem on the cylinder domain. At this point it is important to remember that in the hyperbolic case with  $\alpha =1$ on  $\R\times \R$, the critical regularity for well-posedness is $L^2$, but on $\T\times \T$ it was flagged 
	in \cite{Wang} that the optimal regularity must be $s=\frac12$.
	
	\medskip 
	\item The proof of Theorem \ref{Th-F-Strichartz} can also be performed in $\R^2$. Indeed, similar to the case $U^{+}_1$ treated in  \cite{Takaoka-Tzvetkov}, the Bourgain's method to obtain Strichartz inequalities 
	on cylinder for $U^{\pm}_{\alpha}$ also provides a proof of the Strichartz estimate with data  on $\R^2$ without using the time decay estimates coming from dispersion. Also, for subcritical nonlinearity  $|u|^{p-1}u\, (1< p <3)$ 
	instead the critical case $|u|^2u\, (p=3)$ globall well-posedness for any data in $L^2(\R\times \T)$ can be achieved in the same way as done in \cite{Takaoka-Tzvetkov} in the case  $U^{+}_1$.
	
	\medskip 
	\item Our approach cannot automatically adapt to the cylinder $\T\times \R$ since our strategy is based on the quadratic structure of the operator symbol with respect to the continuous propagation.
\end{enumerate}

\subsection{Notations and an elementary inequality}
Throughout the paper we will use the following notations: 
\begin{itemize}
	\item $|J|$  denotes the Lebesgue measure of a set $J\subset \R$,
	\item $\mathfrak{m}(\cdot)$ denotes the product measure of the one-dimensional Lebesgue and counting measure,
	\item $0< A(v)\lesssim B(v)$ means that there exists a positive constant $c$ such that $A(v)\leq cB(v)$,  for any $v$ varying on a certain set,
	\item $\lfloor x \rfloor$  denote the integer part of $x$.
\end{itemize}

\noindent 
Furthermore,  $X_{\alpha\pm}^{b,s}(\R\times\R\times\T)$ will denote the Bourgain space associated to the group $U^{\pm}_{\alpha}$, equipped with the norm 
\begin{equation}\label{H-BourgainNorm}
	\begin{split}
		\norm{f}_{X^{b,s}_{\alpha\pm}}^2
		&:= \norm{U^{\pm}_{\alpha}(-t)f}^2_{H^b_tH^s(\R\times\T)}\\
		&=\int_{\tau}\int_{\xi} \sum_{n\in \Z} \langle\, \vab{\xi}\!+\!\vab{n}\,\rangle^{2s}
		\langle\, \tau+ q_{\pm}(\xi, n)\,\rangle^{2b}\vab{\widehat{f}^{t,x,y}(\tau;\xi, n)}^2 d\tau d\xi,
	\end{split}
\end{equation}
where $\langle \cdot \rangle = 1 + |\cdot|$,
\begin{equation}\label{H-BourgainNorm-a}
	q_{\pm}(\xi, n):=\xi^2\pm|n|^{2\alpha}
\end{equation}
and $\widehat{f}^{t,x,y}(\tau;\xi, n)$ denotes the Fourier transform 
\begin{equation*}
  \widehat{f}^{t,x,y}(\tau; \xi,n)=\int_{-\infty}^{\infty}\int_{-\infty}^{\infty}\int_0^{2\pi}f(t;x,y)e^{-i(t\tau +x\xi + ny)}dtdxdy
\end{equation*}
with $(\tau; \xi, n)\in \R\times \R\times \T$.

The next inequality will be useful to establish some future estimates. 
\begin{lemma}\label{elem-inequality-lem}
 Let $\lambda$ be a positive number such that $0< \lambda <1$. Then, it holds that
	\begin{equation}\label{elem-inequality-lem-a}
		(a + b)^{\lambda}\le a^{\lambda} + b^{\lambda},
	\end{equation}
 for all  $a, b\ge 0.$
\end{lemma}

\noindent 
For the sake of completeness we give a proof for this ine\-quality.
	\begin{proof}
		The cases $a=0$ or  $b=0$ are obvious, so we consider $a, b$  strictly positive. Since $\lambda \in (0,1)$ we have 
		$$\frac{a}{a+b} < \Big(\frac{a}{a+b}\Big)^{\lambda} \quad \text{and}\quad  \frac{b}{a+b} < \Big(\frac{b}{a+b}\Big)^{\lambda},$$
		which imply
		\begin{equation}\label{elem-inequality-lem-b}
			1 <  \Big(\frac{a}{a+b}\Big)^{\lambda} + \Big(\frac{b}{a+b}\Big)^{\lambda},
		\end{equation}
		for all $a, b>0.$ The inequality \eqref{elem-inequality-lem-a} is an immediate consequence of \eqref{elem-inequality-lem-b}.
	\end{proof}

\section{Bilinear estimate in the hyperbolic case}
In this section, we present the proof of the key bilinear estimate,  which allows to get the $L^4$ Strichartz estimate for $U^{-}_{\alpha}$, 
with $\alpha >1$. To derive this bilinear estimate we need, as in the elliptic case treated in \cite{Takaoka-Tzvetkov}, to obtain uniform estimates of the measures 
for certain special sets (see \cite[Lemma 2.1]{Takaoka-Tzvetkov}). In our context the corresponding sets are unbounded and to estimate their measures is necessary to deal with series estimation. In particular, the convergence of such series occurs in the case $\alpha >1$ and fails in the case $0< \alpha \le 1$. 

We start by showing a similar result to that present in \cite[Lemma 2.1]{Takaoka-Tzvetkov}, which in our context reads as follows:

\begin{lemma}\label{medida-produto-alfa}
	Let $\alpha >1$, $\xi_0\in \R$,  $n_0\in\Z$ and $C>0$. Then for all $K\geq 1$, the set
	$$\mathcal{G}_K:=\Big \lbrace (\xi, n) \in\R\times\Z: C\leq \big|(\xi -\xi_0)^2  - \tfrac{1}{2}\big(|n|^{2\alpha} 
	+ |n-n_0|^{2\alpha}\big)\big|\leq C+K\Big\rbrace$$
	satisfies the estimate 
	\begin{equation}\label{medida-produto-alfa-a}
		\sup\limits_{(\xi_0, n_0, C)\in  \Lambda}\mathfrak{m}(\mathcal{G}_K) \lesssim_{\alpha} K, 
	\end{equation} 
	where $\Lambda:=  \R\times \Z\times \R^+$.
\end{lemma}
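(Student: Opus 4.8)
The plan is to collapse the two–dimensional measure to a one–dimensional integral by slicing in the continuous variable. By Tonelli's theorem $\mathfrak m(\mathcal G_K)=\sum_{n\in\Z}\lambda(n)$ with $\lambda(n):=\bigl|\{\xi\in\R:(\xi,n)\in\mathcal G_K\}\bigr|$. Setting $a_n:=\tfrac12\bigl(|n|^{2\alpha}+|n-n_0|^{2\alpha}\bigr)$, so that the $\xi$–slice equals $\{\xi:|(\xi-\xi_0)^2-a_n|\in[C,C+K]\}$, and using its symmetry about $\xi_0$ together with the substitution $u=(\xi-\xi_0)^2$, one obtains the exact formula
\[
\lambda(n)=\int_0^\infty\frac{\mathbf 1\!\left[\,|u-a_n|\in[C,C+K]\,\right]}{\sqrt u}\,du
=\underbrace{\int_{a_n+C}^{\,a_n+C+K}\frac{du}{\sqrt u}}_{=:\,\lambda^{+}(n)}
\;+\;\underbrace{\int_{(a_n-C-K)_+}^{\,(a_n-C)_+}\frac{du}{\sqrt u}}_{=:\,\lambda^{-}(n)},
\]
where $s_+:=\max(s,0)$ and $\lambda^{-}(n)=0$ unless $a_n\ge C$. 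I would use two elementary facts about $\int_E u^{-1/2}\,du$ for an interval $E\subset[0,\infty)$ of length $\le K$: it is always $\le 2\sqrt K$, and it is $\le K/\sqrt\ell$ if moreover $E\subset[\ell,\infty)$ with $\ell>0$; hence $\lambda^{+}(n)\le K/\sqrt{a_n+C}\le K/\sqrt{a_n}$ and $\lambda^{-}(n)\le\min\bigl(2\sqrt K,\;2K/\sqrt{(a_n-C)_+}\bigr)$. Everything that follows must be tracked uniformly in $(\xi_0,n_0,C)\in\Lambda$.

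The hypothesis $\alpha>1$ enters through the convexity of the ``phase'' $h(x):=\tfrac12(|x|^{2\alpha}+|x-n_0|^{2\alpha})$ (note $a_n=h(n)$, and $s\mapsto|s|^{2\alpha}$ is convex since $2\alpha>1$), which I would exploit three ways. First, from $|A|^{2\alpha}+|B|^{2\alpha}\ge\max(|A|,|B|)^{2\alpha}\ge\bigl(\tfrac12(|A|+|B|)\bigr)^{2\alpha}\ge\bigl(\tfrac12\max(|A+B|,|A-B|)\bigr)^{2\alpha}$ applied with $A=n$, $B=n_0-n$, one gets $a_n\ge\tfrac14\bigl|n-\tfrac{n_0}{2}\bigr|^{2\alpha}$, hence $\sqrt{a_n}\ge\tfrac12\bigl|n-\tfrac{n_0}{2}\bigr|^{\alpha}$. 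Second, $h$ is convex, decreasing on $(-\infty,\tfrac{n_0}{2}]$ and increasing on $[\tfrac{n_0}{2},\infty)$, with $h'(x)\ge c_\alpha\bigl|x-\tfrac{n_0}{2}\bigr|^{2\alpha-1}$ (an elementary one–variable inequality valid for $\alpha\ge1$) and $h(x)\le 2^{2\alpha}\max\bigl(|\tfrac{n_0}{2}|,|x-\tfrac{n_0}{2}|\bigr)^{2\alpha}$; in particular consecutive values on each half–line obey $a_{n+1}-a_n\ge c_\alpha|n-\tfrac{n_0}{2}|^{2\alpha-1}$. Third, near its minimizer, $h(\tfrac{n_0}{2}+s)-h(\tfrac{n_0}{2})\gtrsim_\alpha|\tfrac{n_0}{2}|^{2\alpha-2}s^2$ for $|s|\le|\tfrac{n_0}{2}|$. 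The upshot is a bound for the lattice count $Q(A):=\#\{n\in\Z:a_n\in[A,A+K]\}$, $A>0$: the set $\{a_n\in[A,A+K]\}$ is a union of at most two monotone blocks, symmetric about $\tfrac{n_0}{2}$; on each, the smallest gap occurs at the endpoint $p$ nearest $\tfrac{n_0}{2}$ and is $\ge c_\alpha|p-\tfrac{n_0}{2}|^{2\alpha-1}$, and $A\le h(p)\le 2^{2\alpha}\max(|\tfrac{n_0}{2}|,|p-\tfrac{n_0}{2}|)^{2\alpha}$ forces either $|p-\tfrac{n_0}{2}|\gtrsim_\alpha A^{1/2\alpha}$ — so that gap is $\gtrsim_\alpha A^{1-1/2\alpha}$ and $Q(A)\lesssim_\alpha 1+K A^{-(1-1/2\alpha)}$ — or $|\tfrac{n_0}{2}|\gtrsim_\alpha A^{1/2\alpha}$, in which case the block lies near the minimum of $h$ and the quadratic bound yields $Q(A)\lesssim_\alpha\sqrt K$; so in all cases $Q(A)\lesssim_\alpha\sqrt K+K A^{-(1-1/2\alpha)}$ (using $K\ge1$).

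With these in hand the ``escaping'' part is immediate: $\lambda^{+}(n)\le K/\sqrt{a_n}\le 2K/|n-\tfrac{n_0}{2}|^{\alpha}$ off the single point $n=n_0=0$ (there $\lambda^{+}(n)\le 2\sqrt K\le 2K$), hence $\sum_n\lambda^{+}(n)\lesssim_\alpha K$ because $\sum_{n\in\Z}|n-\tfrac{n_0}{2}|^{-\alpha}\le c_\alpha<\infty$ uniformly in $n_0$ — and it is exactly this series that diverges when $\alpha\le1$. For $\lambda^{-}$ I would first peel off the tail $a_n\ge 2\max\bigl(C,h(\tfrac{n_0}{2})\bigr)$, where $a_n-C\ge a_n/2$ gives $\lambda^{-}(n)\le 2\sqrt2\,K/\sqrt{a_n}\le 4\sqrt2\,K/|n-\tfrac{n_0}{2}|^{\alpha}$, again summing to $\lesssim_\alpha K$; the remaining ``near–resonant'' range $C\le a_n<2\max\bigl(C,h(\tfrac{n_0}{2})\bigr)$ is the crux, and there I would use the identity $\sum_n\lambda^{-}(n)=\int_C^\infty\frac{Q(A)}{\sqrt{A-C}}\,dA$, restricted to this range, inserting the bound on $Q$: the singularity of $(A-C)^{-1/2}$ at $A=C$ is integrable and is paired with the $\sqrt K$–part of $Q$, while the decay $A^{-(1-1/2\alpha)}$ (together with $1-\tfrac1{2\alpha}\ge\tfrac12$) is paired with $(A-C)^{-1/2}\le A^{-1/2}$ on the bulk; carrying this out needs a further case split on the relative sizes of $C$, $h(\tfrac{n_0}{2})=\min_{x\in\R}h(x)$ and $A$, and Riemann–sum comparisons exploiting the increasing gaps of $\{a_n\}$, after which this piece too is $\lesssim_\alpha K$. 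Adding the contributions gives \eqref{medida-produto-alfa-a}. The step I expect to be the real obstacle is precisely this last estimate for $\lambda^{-}$: unlike the quadratic case $\alpha=1$ of \cite{Takaoka-Tzvetkov}, the symbol $\tau+\xi^2-|n|^{2\alpha}$ cannot be diagonalized, so the count $Q(A)$ and the balance of the singular weight $(A-C)^{-1/2}$ against the increasing gaps of $\{a_n\}$ must be handled by hand through the convexity estimates, and it is there — and in the convergence of $\sum_n|n-\tfrac{n_0}{2}|^{-\alpha}$ — that $\alpha>1$ is indispensable.
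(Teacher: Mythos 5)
Your architecture is sound, and half of it is complete: the slicing identity $\mathfrak m(\mathcal G_K)=\sum_n\bigl(\lambda^+(n)+\lambda^-(n)\bigr)$, the bound $\lambda^+(n)\lesssim K/\sqrt{a_n}$ combined with $a_n\gtrsim|n-\tfrac{n_0}{2}|^{2\alpha}$ and the uniform convergence of $\sum_n|n-\tfrac{n_0}{2}|^{-\alpha}$ for $\alpha>1$ correctly dispose of the ``$+$'' part (this is the paper's estimate of $\mathfrak m(\mathcal G_K^+)$ in essentially the same form, and you correctly identify it as the place where $\alpha>1$ is indispensable), and the far tail $a_n\ge 2C$ of the ``$-$'' part is likewise fine. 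The genuine gap is exactly where you locate it, in the near--resonant range, but the mechanism you propose there does not close. Pairing the $\sqrt K$--part of $Q$ with the integrable singularity gives $\int_C^{2M}\sqrt K\,(A-C)^{-1/2}\,dA=2\sqrt K\,\sqrt{2M-C}$, which is of size $\sqrt{KC}$ when $M=C$ and hence not $\lesssim_\alpha K$ uniformly in $C$; the same failure occurs for the additive constant in the other branch, since $\int_C^{2C}1\cdot(A-C)^{-1/2}\,dA=2\sqrt C$. Already in the simplest case $n_0=0$, $a_n=|n|^{2\alpha}$, $K=1$, one has $Q(A)\lesssim 1$ on all of $[C,2C]$ and the pointwise bound alone yields $\sqrt C$. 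What actually makes the near--resonant sum $\sum_{C\le a_n\le 2M}\min\bigl(\sqrt K,\,K(a_n-C)^{-1/2}\bigr)\lesssim_\alpha K$ true is that $Q$ is supported on $\bigcup_n[a_n-K,a_n]$ with growing gaps $a_{n+1}-a_n$, and extracting the bound requires the Riemann--sum comparison against $\int\sqrt{z^{2\alpha}-C}\,dz$ and $\int(z^{2\alpha}-C)^{-1/2}dz$ (with the substitution $z=C^{1/2\alpha}\rho^{1/\alpha}$ and a case split $C\ge1$ versus $C<1$). That computation is precisely the step you defer to ``a further case split and Riemann--sum comparisons'', and it is the entire analytic content of the hard half of the lemma; as written, the proposal does not contain it.

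For comparison, the paper avoids your convexity analysis of $h$ near its minimizer $\tfrac{n_0}{2}$ altogether: it uses the containment $\mathcal G_K^-\subset\mathcal G_{1,K}^-\cup\mathcal G_{2,K}^-$, which replaces the average $\tfrac12\bigl(|n|^{2\alpha}+|n-n_0|^{2\alpha}\bigr)$ by the single powers $|n|^{2\alpha}$ and $|n-n_0|^{2\alpha}$ (choosing the larger or smaller one according to the direction of each inequality), so that after reindexing the parameter $n_0$ disappears and the near--resonant piece becomes the one--parameter sum $\sum_{C\le n^{2\alpha}\le C+K}\sqrt{n^{2\alpha}-C}$ plus a telescoping tail, each controlled by one explicit integral. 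If you want to complete your route, I would import that containment before attacking the near--resonant sum, rather than fighting the two--variable phase $h$ directly; otherwise you must carry out the deferred Riemann--sum argument for the genuine sequence $\{a_n\}$, including the regime where the block sits near $\tfrac{n_0}{2}$.
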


\begin{proof} 
	By translation invariance it suffices to consider the case $\xi_0=0$.  First we write
	\begin{equation}\label{set-GKC-decomposition-alfa}
		\mathcal{G}_K = \mathcal{G}_K^+\, \dot{\cup}\;  \mathcal{G}_K^-,
	\end{equation}
	where
	\begin{align}
		&\mathcal{G}_K^+  =\mathcal{G}_K \cap \Big \lbrace (\xi, n) \in\R\times \Z: \xi^2 > \tfrac{1}{2}\big(|n|^{2\alpha} 
		+ |n-n_0|^{2\alpha}\big)\Big\rbrace,\label{set-GKC-alfa+}\\
		& \mathcal{G}_K^- =\mathcal{G}_K \cap \Big \lbrace (\xi, n) \in\R\times \Z: \xi^2 \leq  \tfrac{1}{2}\big(|n|^{2\alpha} 
		+ |n-n_0|^{2\alpha}\big)\Big\rbrace \label{set-GKC-alfa-}.
	\end{align}
	Next we estimate the measures of the sets  $\mathcal{G}_K^+$ and $\mathcal{G}_K^-$.
	
	\medskip 
	\noindent 
	{\bf Estimate of  $\boldsymbol{\mathfrak{m}(\mathcal{G}_K^+)}$}. Using \eqref{set-GKC-alfa+} we write 
	
	\begin{equation}\label{proof-main-lemma-alfa-01}
		\mathcal{G}_K^+ = \Big\lbrace (\xi, n) \in\R\times \Z:  C\leq\xi^2 - \tfrac{1}{2}\big(|n|^{2\alpha} + |n-n_0|^{2\alpha}\big)\leq C+K \Big\rbrace.
	\end{equation}

	Let $l$ such that $l \geq 1$ and define
	\begin{equation}\label{proof-main-lemma-alfa-02}
		h(l):=\mathfrak{m}\big(\big\lbrace (\xi, n) \in\R\times \Z:\xi^2- \tfrac{1}{2}\big(|n|^{2\alpha} + |n-n_0|^{2\alpha}\big) \leq l \big \rbrace \big);
	\end{equation}	
	then
	\begin{equation}\label{proof-main-lemma-alfa-02a}
		\mathfrak{m}\big(\mathcal{G}^+_K\big)= h(C+K) - h(C).
	\end{equation}

	\begin{figure}[h!]
		\centering
		\includegraphics[scale=0.2]{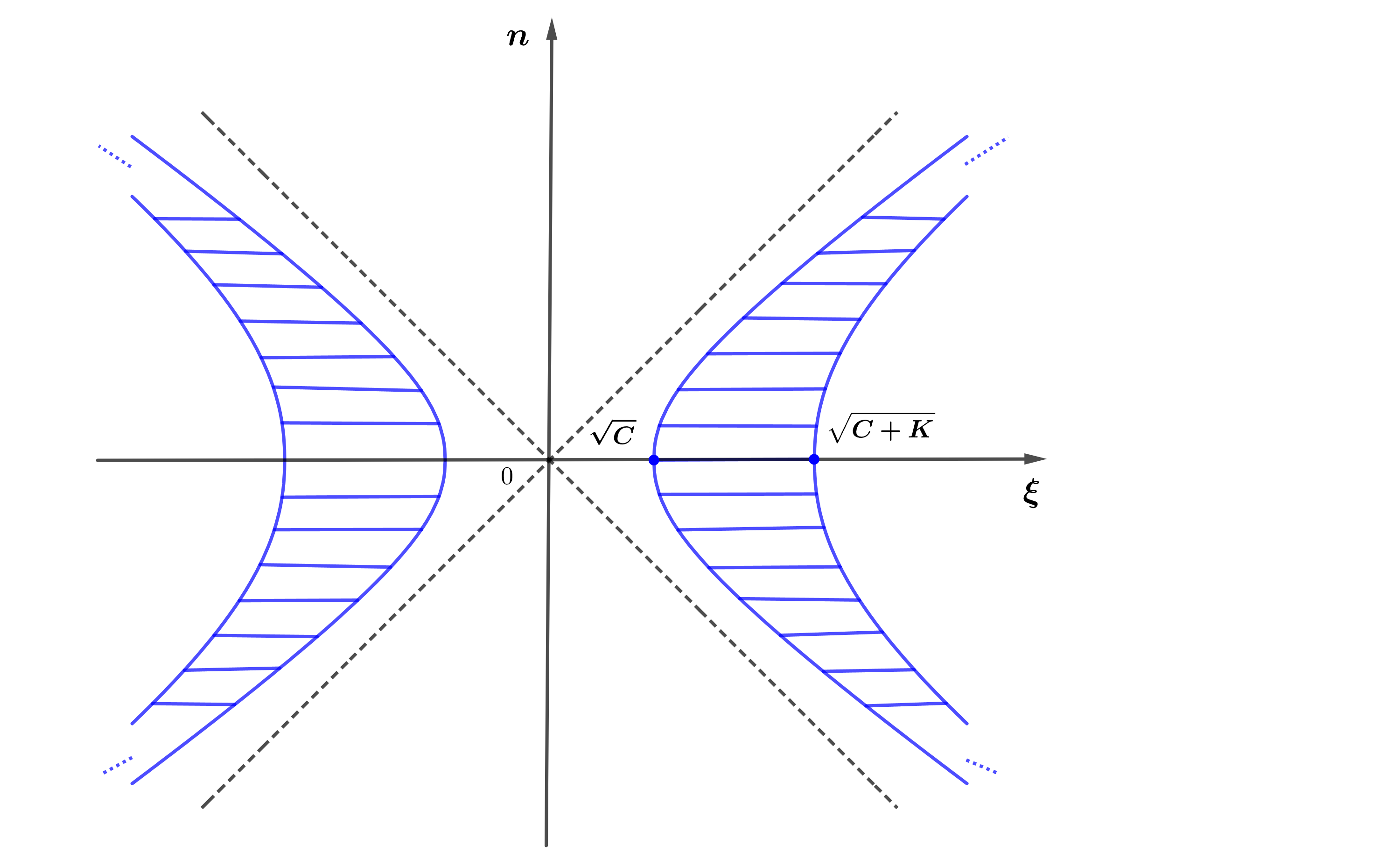}
		\caption{\small The dashed region represents the set $ \mathcal{G}^+_K$ with $\alpha = 1.3$ and $n_0=0$.}
		\label{fig-hyperbolic}
	\end{figure}

	Note that for a fixed $n\in \Z$ we have 
	$$\big|\big\lbrace \xi\in\R: \xi^2-\tfrac{1}{2}\big(|n|^{2\alpha} + |n-n_0|^{2\alpha})\leq l \big \rbrace \big|= 2\sqrt{\tfrac{1}{2}\big(|n|^{2\alpha} + |n-n_0|^{2\alpha}\big) +l};$$
	therefore
	\begin{multline}\label{proof-main-lemma-alfa-03}
		h(l)=2\displaystyle\sum_{n=0}^{\infty}\!\!\sqrt{\tfrac{1}{2}\big(|n|^{2\alpha} + |n-n_0|^{2\alpha}\big) +l}\\ + 2\displaystyle\sum_{n=1}^{\infty}\!\!\sqrt{\tfrac{1}{2}\big(|n|^{2\alpha} + |n+n_0|^{2\alpha}\big) +l}.
	\end{multline}
	Then, from\eqref{proof-main-lemma-alfa-02a} and \eqref{proof-main-lemma-alfa-03} it follows that 
	\begin{equation}\label{proof-main-lemma-alfa-03a}
		\mathfrak{m}\big(\mathcal{G}^+_K\big)= S_1^+ + S_2^+,
	\end{equation}
	where
	\begin{align*}
		& S_1^+= 2\displaystyle\sum_{n=0}^{\infty}\!\!\sqrt{\tfrac{1}{2}\big(|n|^{2\alpha} + |n-n_0|^{2\alpha}\big) +C+K} - 2\displaystyle\sum_{n=0}^{\infty}\!\!\sqrt{\tfrac{1}{2}\big(|n|^{2\alpha} + |n-n_0|^{2\alpha}\big)+C}\\
		& S_2^+=2\displaystyle\sum_{n=1}^{\infty}\!\!\sqrt{\tfrac{1}{2}\big(|n|^{2\alpha} + |n+n_0|^{2\alpha}\big) +C+K} -
		2\displaystyle\sum_{n=1}^{\infty}\!\!\sqrt{\tfrac{1}{2}\big(|n|^{2\alpha} + |n+n_0|^{2\alpha}\big) +C}.
	\end{align*}
	For $S_1^+$ we have
	$$S_1^+= 2\sum_{n=0}^{\infty}\frac{K}{\sqrt{\tfrac{1}{2}\big(|n|^{2\alpha} + |n-n_0|^{2\alpha}\big) +C+K}+\sqrt{\tfrac{1}{2}\big(|n|^{2\alpha} + |n-n_0|^{2\alpha}\big)+C}},$$
	then
	\begin{equation}\label{proof-main-lemma-alfa-03b}
		\begin{split}
			S_1^+& \le 2\sqrt{2}\sum_{n=0}^{\infty}\frac{K}{\sqrt{n^{2\alpha}+C+K}}\\
			&\le 2\sqrt{2}\Big( \frac{K}{\sqrt{C+K}} + K\sum_{n=1}^{\infty}\frac{1}{n^{\alpha}}\Big)\\
			&\lesssim_{\alpha}K, 
		\end{split}
	\end{equation}
	where in the last estimate it has been  used that $\alpha>1$ and $C+K\ge 1$. In a similar way one gets
	\begin{equation*}\label{proof-main-lemma-alfa-03c}
		S_2^+\lesssim_{\alpha}K.
	\end{equation*}
	Therefore, from \eqref{proof-main-lemma-alfa-03a} we have 
	\begin{equation}\label{proof-main-lemma-alfa-03d}
		\mathfrak{m}\big(\mathcal{G}^+_K\big)\lesssim_{\alpha}K.
	\end{equation}
	
	\medskip 
	\noindent
	{\bf Estimate of  $\boldsymbol{\mathfrak{m}(\mathcal{G}_K^-)}$}. This case is more delicate. We write
	\begin{equation}\label{proof-main-lemma-alfa-04}
		\mathcal{G}_K^- = \Big\lbrace (\xi, n) \in \R\times \Z: -(C+K)\leq \xi^2-\tfrac{1}{2}\big(|n|^{2\alpha} + |n-n_0|^{2\alpha}\big)\leq -C\Big\rbrace
	\end{equation}
	and observe that 
	\begin{equation}\label{proof-main-lemma-alfa-04a}
		\mathcal{G}_K^- \subset \mathcal{G}_{1,K}^{-} \cup 	\mathcal{G}_{2,K}^-,
	\end{equation}
	where 
	\begin{align*}
		&\mathcal{G}_{1,K}^{-} =  \Big\lbrace (\xi, n) \in \R\times \Z: |n-n_0|^{2\alpha}-(C+K)\leq \xi^2 \leq |n|^{2\alpha}-C\Big\rbrace,\\
		&\mathcal{G}_{2,K}^{-} =  \Big\lbrace (\xi, n) \in \R\times \Z: |n|^{2\alpha}-(C+K)\leq \xi^2 \leq |n-n_0|^{2\alpha}-C\Big\rbrace.
	\end{align*}
	Indeed, $\mathcal{G}_{1,K}^{-}$ contains the points of $\mathcal{G}_K^{-} $ with $|n-n_0|\le |n|$ and  $\mathcal{G}_{2,K}^{-}$  those that satisfy  $|n-n_0| > |n|$.
	
	Similar arguments to those used to estimate $\mathcal{G}_K^+$ give us 
	$$\mathfrak{m}\big(\mathcal{G}^-_{1,K}\big) =4\Big(\sum_{n=\lfloor C^{\frac{1}{2\alpha}}\rfloor+1}^{\infty}\!\!\!\sqrt{n^{2\alpha}-C}\,\,\, -\!\!\!\!\!\sum_{n-n_0 = \lfloor (C+K)^{\frac{1}{2\alpha}}\rfloor+1}^{\infty}\!\!\!\!\!\!\!\sqrt{(n-n_0)^{2\alpha}-C-K}\Big),$$
	which implies
	\begin{equation}\label{proof-main-lemma-alfa-05}
		\begin{split}
			\mathfrak{m}\big(\mathcal{G}^-_{1,K}\big)
			&=4\Big(\sum_{n=\lfloor C^{\frac{1}{2\alpha}}\rfloor+1}^{\infty}\!\!\!\sqrt{n^{2\alpha}-C}\,\,\, - \!\!\!\sum_{n= \lfloor (C+K)^{\frac{1}{2\alpha}}\rfloor+1}^{\infty}\!\!\!\!\!\sqrt{n^{2\alpha}-C-K}\Big)\\
			&:=4(S^-_1+S^-_2),
		\end{split}
	\end{equation}
	where
	\begin{align*}
		S^-_1&=\sum_{n=\lfloor C^{\frac{1}{2\alpha}}\rfloor+1}^{\lfloor (C+K)^{\frac{1}{2\alpha}}\rfloor}\sqrt{n^{2\alpha}-C},\\
		S^-_2&=\sum_{n=\lfloor(C+K)^{\frac{1}{2\alpha}}\rfloor+1}^{\infty}\big(\sqrt{n^{2\alpha}-C}-\sqrt{n^{2\alpha}-C-K}\big).
	\end{align*}

	To estimate $S_1^{-}$ we observe that
	\begin{equation}\label{proof-main-lemma-alfa-06}
		\begin{split}
			S^-_1&\leq \int_{C^{\frac{1}{2\alpha}}}^{\lfloor (C+K)^{\frac{1}{2\alpha}}\rfloor}\!\!\sqrt{z^{2\alpha}-C}\,dz 
			+ \sqrt{\lfloor (C+K)^{\frac{1}{2\alpha}}\rfloor^{2\alpha}-C}\\
			&\leq \underbrace{\int_{C^{\frac{1}{2\alpha}}}^{(C+K)^{\frac{1}{2\alpha}}}\!\!\sqrt{z^{2\alpha}-C}\,dz}_{\cJ_1} + \sqrt{K}. 
		\end{split}
	\end{equation}
	
	\noindent 
	To estimate the integral $\cJ_1$ we use that  $z \mapsto \sqrt{z^{2\alpha}-C}$ is an increasing function to get
		\begin{equation*}\label{proof-main-lemma-alfa-07}
			\cJ_1 \leq \sqrt{K}  \big((C+K)^{\frac{1}{2\alpha}}- C^{\frac{1}{2\alpha}}\big).	
		\end{equation*}
	Then, applying Lemma \ref{elem-inequality-lem} (remember that  $\alpha >1$) one gets 
		\begin{equation*}\label{proof-main-lemma-alfa-07a}
			\cJ_1 \leq \sqrt{K} K^{\frac{1}{2\alpha}}\le K,	
		\end{equation*}
	where in the last estimate it has been  used that $K\geq 1$ and $\alpha >1.$ Therefore, from \eqref{proof-main-lemma-alfa-06} and the estimate for $\cJ_1$ we conclude that  $S^{-}_1 \lesssim K.$

	Now we estimate $S_2^-$ as follows
	\begin{equation}\label{proof-main-lemma-alfa-10}
		\begin{split}
			S^-_2&=\sum_{n=\lfloor(C+K)^{\frac{1}{2\alpha}}\rfloor+1}^{\infty}\frac{K}{\sqrt{n^{2\alpha}-C}+\sqrt{n^{2\alpha}-C-K}}\\
			&\leq \sum_{n=\lfloor(C+K)^{\frac{1}{2\alpha}}\rfloor+1}^{\infty}\!\!\!\frac{K}{\sqrt{n^{2\alpha}-C}}\\
			&\leq \frac{K}{\sqrt{\big(\lfloor(C+K)^{\frac{1}{2\alpha}}\rfloor+1\big)^{2\alpha} - C}} + \underbrace{\int_{(C+K)^{\frac{1}{2\alpha}}}^{\infty}
				\frac{K}{\sqrt{z^{2\alpha}-C}}dz}_{\cJ_2}\\
			&\leq \sqrt{K} + \cJ_2.
		\end{split}
	\end{equation}
	Now we proceed to estimate the integral $\cJ_2$. Considering  the nonlinear change of  variables  $\rho^{2\alpha}=z^{2\alpha} -C$, and using that $\alpha >1$, $C>0$ and $K\ge1$, we have
		\begin{equation}\label{proof-main-lemma-alfa-10a}
			\begin{split}
				\cJ_2&=K\int_{K^{\frac{1}{2\alpha}}}^{\infty}\frac{\rho^{2\alpha-1}}{\rho^{\alpha}\big( \rho^{2\alpha} + C\big)^{1-\frac{1}{2\alpha}}}d\rho\\
				&\le K\int_{K^{\frac{1}{2\alpha}}}^{\infty}\rho^{-\alpha}d\rho\\
				&=\frac{1}{\alpha-1}KK^{\frac{1-\alpha}{2\alpha}}\\
				&\lesssim_{\alpha}K.
			\end{split}
		\end{equation}
Then, inserting \eqref{proof-main-lemma-alfa-10a} in \eqref{proof-main-lemma-alfa-10} one gets $S^-_2 \lesssim_{\alpha} K.$
	
Therefore, from \eqref{proof-main-lemma-alfa-05} and the estimates obtained for $S^-_1$ and $S^-_2$ we have 
	$$\mathfrak{m}\big(\mathcal{G}^-_{1,K}\big)\lesssim_{\alpha}K.$$ 
By the same way we have $\mathfrak{m}\big(\mathcal{G}^-_{2,K}\big)\lesssim_{\alpha}K.$ So, 
	$$\mathfrak{m}\big(\mathcal{G}^-_K\big) \le \mathfrak{m}\big(\mathcal{G}^-_{1,K}\big) + \mathfrak{m}\big(\mathcal{G}^-_{2,K}\big)  \lesssim_{\alpha}K,$$ 
which joint with \eqref{proof-main-lemma-alfa-03d} give us \eqref{medida-produto-alfa-a}. This completes  the proof of Lemma \ref{medida-produto-alfa}.
\end{proof}

\begin{remark}\label{remark-alpha-small-hyp} For the case $0< \alpha \leq 1$  the result stated in Lemma \ref{medida-produto-alfa} fails. Indeed, the series $S_1^+$ in \eqref{proof-main-lemma-alfa-03b} diverges for all
	$0< \alpha \le 1$. 
\end{remark}

\begin{proposition}[hyperbolic bilinear estimate]\label{Prop-bilinear-alpha}
	Let $u_1$ and $u_2$ be two functions on $L^2(\R\times\R\times\T)$ with the following support properties: 
	\begin{equation*}
		\text{supp}\,(\widehat{u_j}) \subseteq \mathcal{H}^-_{K_j},~~ j=1,2,
	\end{equation*}
	where
	\begin{equation}\label{Prop-bilinear-alpha-a}
		\mathcal{H}^-_{K_j}:=\Big\{(\tau;\xi,n):\tfrac1{2}K_j\le \vab{\tau+\xi^2-|n|^{2\alpha}}\leq 2K_j\Big\}.
	\end{equation}
	Then, we have the following inequality  
	\begin{equation}\label{Prop-bilinear-alpha-b}
		\norm{u_1u_2}_{L^2(\R\times\R\times\mathbb{T})}\lesssim (K_1K_2)^{\frac12}~\norm{u_1}_{L^2(\R\times\R\times\mathbb{T})}~\norm{u_2}_{L^2(\R\times\R\times\mathbb{T})}.
	\end{equation} 
\end{proposition}

\begin{proof}
	Set $(\tau_2;\xi_2, n_2): = (\tau-\tau_1;\xi-\xi_1,n-n_1)$. Using the Cauchy-Schwarz inequality and Plancherel's theorem, we have
	\begin{equation}\label{proof-Prop-bilinear-alpha-00}
		\begin{split}
			\norm{u_1u_2}_{L^2_{txy}}^2&\!=\!\int_{\tau}\!\int_{\xi} \sum_{n\in \Z} \Big|\int_{\tau_1}\!\int_{\xi_1}
			\sum_{n_1\in \Z}\!\!\mathds{1}
			_{\cA_{\tau\xi n}}\!(\tau_1;\xi_1,n_1) \cdot \widehat{u_1}^{t,x,y}(\tau_1;\xi_1,n_1)\\
			&\hspace{3.95cm}\cdot \widehat{u_2}^{t,x,y}(\tau_2;\xi_2,n_2)d\tau_1 d\xi_1 \Big|^2 d\tau d\xi\\
			&\lesssim\underset{\tau, \xi, n}{\mbox{sup}}\,\mathfrak{m}(\cA_{\tau\xi n}) \norm{u_1}^2_{L^2_{t x y}}\norm{u_2}^2_{L^2_{t x y}},
		\end{split}
	\end{equation}
with  $\cA_{\tau,\xi,n}$ defined as follows:
		\[(\tau_1;\xi_1,n_1)\in \cA_{\tau,\xi,n} \Longleftrightarrow 
		\begin{cases}
			(\tau_1;\xi_1,n_1)\in  \mbox{supp}\,(\widehat{u_1}^{t,x,y})\\
			\hspace{1.5cm}\text{and}\\
			(\tau_2;\xi_2, n_2)\in  \mbox{supp}\,(\widehat{u_2}^{t,x,y}).
		\end{cases}
		\]
Notice that if $(\tau_1;\xi_1, n_1) \in \cA_{\tau,\xi, n}$, then 
	\begin{equation*}\label{proof-Prop-bilinear-alpha-01}
		\tau_1 \in J_1:= \big[a_1 -2K_1,\, a_1+2K_1\big]  \quad \text{and} \quad \tau_1 \in J_2:= \big[a_2 -2K_2,\, a_2 + 2K_2\big],
	\end{equation*}
	where $a_1 = |n_1|^{2\alpha} -\xi_1^2$ and $a_2 = \tau +(\xi-\xi_1)^2 - |n-n_1|^{2\alpha}$. Hence
	\begin{equation}\label{proof-Prop-bilinear-alpha-02}
		\tau_1 \in J_1\cap J_2, \;\text{with}\; |J_1\cap J_2| \le 4\min\{K_1, K_2\}.
	\end{equation}
	
	On the other hand,  for all $(\tau_1;\xi_1,n_1)\in \cA_{\tau,\xi,n}$ we can use the triangle inequality to eliminate $\tau_1$ to get 
	\begin{equation}\label{proof-Prop-bilinear-alpha-03}
		\begin{split}
			\Big|(\xi_1-&\tfrac{\xi}{2})^2  - \tfrac{1}{2}\big(|n_1|^{2\alpha} + |n-n_1|^{2\alpha}\big) + \tfrac{\tau}{2} +\tfrac{\xi^2}{4}\Big|\\
			& = \frac{1}{2}\big|\tau + \xi_1^2-|n_1|^{2\alpha} + (\xi-\xi_1)^2 - |n-n_1|^{2\alpha}\big|\\
			&\leq \tfrac{1}{2}\Big(\big|\tau_1+ \xi_1^2-|n_1|^{2\alpha}\big| +  \big|\tau -\tau_1 + (\xi-\xi_1)^2 - |n-n_1|^{2\alpha} \big| \Big)\\
			&< K_1 + K_2.
		\end{split}
	\end{equation}
	So, 
	\begin{equation}\label{proof-Prop-bilinear-alpha-04}
		\text{if}\; (\tau_1;\xi_1, n_1) \in \cA_{\tau,\xi, n},\; \text{then}\; (\xi_1, n_1) \in \cB_{\tau,\xi, n}\footnote{This is the reason that led us to consider $\mathcal{G}_K$ slightly different from the set that should naturally replace the used in \cite[Lemma 2.1]{Takaoka-Tzvetkov}.}, 
	\end{equation}
	where
	$$\cB_{\tau,\xi, n}:=\Big\{(\xi_1, n_1)\in\R\times \Z: \big|(\xi_1-\tfrac{\xi}{2})^2  - \tfrac{1}{2}\big(|n_1|^{2\alpha} 
	+ |n_1-n|^{2\alpha}\big) + \tfrac{\tau}{2} + \tfrac{\xi^2}{4}\big| \le K_1 + K_2\Big\}.$$
	
	\noindent 
	Now, from triangle inequality, we note that 
	$$\cB_{\tau,\xi, n} \subset \Big\lbrace (\xi_1, n_1) \in\R\times \Z: C\leq \big|(\xi_1-\tfrac{\xi}{2})^2 - \tfrac{1}{2}\big(|n_1|^{2\alpha} + |n_1-n|^{2\alpha}\big)\big| \leq C +2(K_1+K_2)\Big\rbrace$$
	with $C=\big|\tfrac{\tau}{2}+\tfrac{\xi^2}{4}\big| - (K_1 +K_2).$ Hence, if $C>0$ we use Lemma \ref{medida-produto-alfa} (with $\xi_0=\tfrac{\xi}{2}$ and $n_0=n$) to get 
	$$\frak{m}(\cB_{\tau,\xi, n})\lesssim K_1+K_2.$$
	Otherwise, if $C<0$ we have 
	$$\frak{m}(\cB_{\tau,\xi, n})  \leq \frak{m} \Big(\Big\{(\xi_1, n_1): \big|(\xi_1-\tfrac{\xi}{2})^2  - \tfrac{1}{2}\big(|n_1|^{2\alpha} + |n_1-n|^{2\alpha}\big)\big| \le 2(K_1 + K_2)\Big\} \Big)$$
	and consequently
	$$\frak{m}(\cB_{\tau,\xi, n})\le \lim\limits_{\varepsilon \searrow 0}\frak{m} \Big(\Big\{(\xi_1, n_1): \varepsilon< \big|(\xi_1-\tfrac{\xi}{2})^2  - \tfrac{1}{2}\big(|n_1|^{2\alpha} 
	+ |n_1-n|^{2\alpha}\big)\big| \le 2(K_1 + K_2)\Big\}\Big),$$
	so using again Lemma \ref{medida-produto-alfa} it follows that
	\begin{equation}\label{proof-Prop-bilinear-alpha-05}
		\frak{m}(\cB_{\tau,\xi, n})\lesssim K_1 + K_2.
	\end{equation} 
	
	Finally, collecting the information in \eqref{proof-Prop-bilinear-alpha-02},  \eqref{proof-Prop-bilinear-alpha-04} and \eqref{proof-Prop-bilinear-alpha-05} one gets 
	$$\mathfrak{m}(\cA_{\tau,\xi, n})\le |J_1\cap J_2|\cdot \mathfrak{m}(\cB_{\tau,\xi, n}) \lesssim \min\{K_1,K_2\}\max\{K_1,K_2\}$$
	and inserting this in \eqref{proof-Prop-bilinear-alpha-00} we obtain 
	\begin{equation*}
		\norm{u_1u_2}_{L^2_{txy}}^2\lesssim K_1K_2\norm{u_1}^2_{L^2_{t x y}}\norm{u_2}^2_{L^2_{t x y}}.
	\end{equation*}
	Then, the result is proved. 
\end{proof}

\section{Bilinear estimate in the elliptic case} Now we prove the corresponding bilinear estimate for the elliptic symbol of the operator.  

\begin{lemma}\label{medida-produto-alfa-elip}
	Let $\alpha \ge 1$, $\xi_0\in \R$,  $n_0\in\Z$ and $C\ge 1$. Then for all $K\geq 1$, the set
	$$\widetilde{\mathcal{G}}_K:=\Big \lbrace (\xi, n) \in\R\times\Z: C\leq (\xi -\xi_0)^2  + \tfrac{1}{2}\big(|n|^{2\alpha} 
	+ |n-n_0|^{2\alpha}\big)\leq C+K\Big\rbrace$$
	satisfies the estimate 
	\begin{equation}\label{medida-produto-alfa-elip-a}
		\sup\limits_{(\xi_0, n_0, C)\in  \Lambda}\mathfrak{m}(\widetilde{\mathcal{G}}_K) \lesssim_{\alpha} K, 
	\end{equation} 
	where $\Lambda:=  \R\times \Z\times \R^+$.
\end{lemma}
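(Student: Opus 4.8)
The plan is to mimic the structure of the proof of Lemma~\ref{medida-produto-alfa}, but with the crucial simplification that in the elliptic case the constraint region is \emph{bounded} rather than hyperbolic. By translation invariance in $\xi$ we may again take $\xi_0=0$. For a fixed $n\in\Z$ the slice
\[
\big\lbrace \xi\in\R:\; C\le \xi^2+\tfrac12\big(|n|^{2\alpha}+|n-n_0|^{2\alpha}\big)\le C+K\big\rbrace
\]
is nonempty only when $\tfrac12(|n|^{2\alpha}+|n-n_0|^{2\alpha})\le C+K$, and on that slice $\xi^2$ ranges over an interval of length at most $K$ sitting inside $[\,\max\{0,\,C-\tfrac12(|n|^{2\alpha}+|n-n_0|^{2\alpha})\},\,C+K-\tfrac12(|n|^{2\alpha}+|n-n_0|^{2\alpha})\,]$. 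First I would record the elementary one-variable fact that
\[
\mathfrak m\big(\{\xi\in\R: a\le \xi^2\le b\}\big)=2\big(\sqrt{b}-\sqrt{a_+}\big)\le \frac{b-a}{\sqrt{a_+}} \quad\text{and}\quad \le 2\sqrt{b-a_+}
\]
for $0\le a\le b$, so each slice has measure $\lesssim \min\{\sqrt{K},\,K/\sqrt{a_n}\}$ where $a_n:=\big(C-\tfrac12(|n|^{2\alpha}+|n-n_0|^{2\alpha})\big)_+$.

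Next I would split the sum over $n$ according to whether the slice is ``bulk'' or ``edge''. Let $N_0$ be determined by $\tfrac12(|n|^{2\alpha}+|n-n_0|^{2\alpha})\le C$, i.e.\ roughly $|n|,\,|n-n_0|\lesssim C^{1/2\alpha}$; there are $O(C^{1/2\alpha})$ such $n$. For these ``edge'' indices I use the bound $2\sqrt{K}$ per slice — but this only gives $\lesssim C^{1/2\alpha}\sqrt{K}$, which is \emph{not} $\lesssim K$ when $C$ is large, so a finer argument is needed here. The fix is to use monotonicity as in \eqref{proof-main-lemma-alfa-05}–\eqref{proof-main-lemma-alfa-09}: writing the full count as a difference of two telescoping-type sums $h(C+K)-h(C)$ with $h(\ell)=2\sum_{n:\,\frac12(\cdots)\le \ell}\sqrt{\ell-\tfrac12(|n|^{2\alpha}+|n-n_0|^{2\alpha})}$, the ``edge'' contribution becomes $S_1 = 2\sum_{n\in E}\sqrt{C+K-\tfrac12(|n|^{2\alpha}+|n-n_0|^{2\alpha})}$ over indices $E$ where the lower term vanishes, which one estimates by comparison with the integral $\int \sqrt{C+K-z^{2\alpha}}\,dz$ over $z\lesssim C^{1/2\alpha}$, exactly as $\cJ_1$ was handled in \eqref{proof-main-lemma-alfa-06}–\eqref{proof-main-lemma-alfa-09} (this is where $C\ge1$ is used, and is the only place the hypothesis $C\ge 1$ — stronger than $C>0$ in Lemma~\ref{medida-produto-alfa} — enters). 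For the ``bulk'' indices $n\notin E$, each slice contributes $\lesssim K/\sqrt{a_n}$ with $a_n\gtrsim \tfrac12(|n|^{2\alpha}+|n-n_0|^{2\alpha})-C\gtrsim \max\{|n|^{2\alpha},|n-n_0|^{2\alpha}\}-C$, and summing the resulting geometric-type series $K\sum_n (|n|^{2\alpha}-C)^{-1/2}$ over $|n|\ge (C)^{1/2\alpha}+1$ converges and is $\lesssim_\alpha K$ precisely because $\alpha>1$ makes $\sum n^{-\alpha}<\infty$ — and at the boundary case $\alpha=1$ one uses the extra telescoping structure ($\sqrt{n^{2}-C}-\sqrt{n^2-C-K}$ type differences) rather than the crude $K/\sqrt{a_n}$ bound, just as in \eqref{proof-main-lemma-alfa-10}.

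Concretely, the cleanest route is: (1) reduce to $\xi_0=0$; (2) write $\mathfrak m(\widetilde{\mathcal G}_K)=\widetilde S_1+\widetilde S_2$ with $\widetilde S_1$ the sum over $n$ with $\tfrac12(|n|^{2\alpha}+|n-n_0|^{2\alpha})\le C$ (each term $\le 2\sqrt{K}$ after the lower bound drops out, total controlled by the integral $\cJ_1$-type estimate using $C\ge1$ and $\alpha\ge1$) and $\widetilde S_2$ the sum over $n$ with $C<\tfrac12(|n|^{2\alpha}+|n-n_0|^{2\alpha})\le C+K$ (each term $=2(\sqrt{C+K-\tfrac12(\cdots)}-0)\le 2\sqrt{K}$, but now the number of such $n$ is $\lesssim (C+K)^{1/2\alpha}-C^{1/2\alpha}\lesssim_\alpha K^{1/2\alpha}\lesssim_\alpha \sqrt K$ by the mean value theorem since $\alpha\ge1$ and $K\ge1$, giving $\widetilde S_2\lesssim_\alpha K$); alternatively handle $\widetilde S_2$ by the $S_1^-$-style integral comparison. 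The main obstacle, and the only subtle point, is the bookkeeping in $\widetilde S_1$ when $C$ is large: one must not bound it termwise by $\sqrt K$ times the number of terms, but instead exploit that $\sqrt{C+K-\tfrac12(|n|^{2\alpha}+|n-n_0|^{2\alpha})}$ is genuinely small for most of the $O(C^{1/2\alpha})$ relevant $n$ (only those within $O(K)$ in the ``$z^{2\alpha}$''-variable of the boundary contribute $\sim\sqrt K$), converting the sum to the integral $\int_{0}^{C^{1/2\alpha}}\sqrt{(C+K-z^{2\alpha})_+}\,dz$ minus $\int_0^{C^{1/2\alpha}}\sqrt{(C-z^{2\alpha})_+}\,dz$ and estimating the difference by $K$ via the same change of variables $z=C^{1/2\alpha}\rho^{1/\alpha}$ and mean value theorem used for $\cJ_1$ in \eqref{proof-main-lemma-alfa-08}–\eqref{proof-main-lemma-alfa-09}. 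Everything else is strictly easier than the hyperbolic case because there is no $\mathcal G_K^-$ analogue — the set $\widetilde{\mathcal G}_K$ has no ``hyperbolic branch'' escaping to infinity, so no second decomposition \eqref{proof-main-lemma-alfa-04a} is needed.
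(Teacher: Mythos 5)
Your overall strategy coincides with the paper's: reduce to $\xi_0=0$; split $\mathfrak m(\widetilde{\mathcal G}_K)$ into an ``interior'' sum $\widetilde S_1$ of differences $\sqrt{C+K-\cdot}-\sqrt{C-\cdot}$ (handled via the difference--quotient identity, comparison with $\int_0^{C^{1/2\alpha}}dz/\sqrt{C-z^{2\alpha}}$ and the substitution $z=C^{1/2\alpha}\rho^{1/\alpha}$, which is exactly where $C\ge1$ and $\alpha\ge1$ enter) and an ``annulus'' sum $\widetilde S_2$ (each term $\le 2\sqrt K$, number of terms $\lesssim (C+K)^{1/2\alpha}-C^{1/2\alpha}\lesssim \sqrt K$). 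This is the paper's proof of Lemma~\ref{medida-produto-alfa-elip} almost verbatim.

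There are, however, two concrete defects. First, your closing claim that ``no second decomposition \eqref{proof-main-lemma-alfa-04a} is needed'' is mistaken: the paper performs precisely the analogous splitting $\widetilde{\mathcal G}_K\subset\widetilde{\mathcal G}_{1,K}\cup\widetilde{\mathcal G}_{2,K}$ according to $|n-n_0|\le|n|$ versus $|n-n_0|>|n|$, and your argument implicitly requires it. All of your quantitative claims are statements about a single power $|n|^{2\alpha}$, not about $\tfrac12(|n|^{2\alpha}+|n-n_0|^{2\alpha})$: the bound $1/\sqrt{C-\tfrac12(|n|^{2\alpha}+|n-n_0|^{2\alpha})}\le 1/\sqrt{C-|n|^{2\alpha}}$ is valid only when $|n-n_0|\le|n|$ (which is exactly the case distinction of the decomposition), and the count of $n$ with $C<\tfrac12(|n|^{2\alpha}+|n-n_0|^{2\alpha})\le C+K$ is not $\lesssim(C+K)^{1/2\alpha}-C^{1/2\alpha}$ by any one-line argument --- replacing the half-sum by $\max\{|n|,|n-n_0|\}^{2\alpha}$ loses a factor of $2$ inside the $(\cdot)^{1/2\alpha}$, which destroys the estimate for large $C$. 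You must either reinstate the $\widetilde{\mathcal G}_{1,K}/\widetilde{\mathcal G}_{2,K}$ reduction (after which the sums genuinely become single-power sums, as in \eqref{proof-main-lemma-alfa-elip-01}) or supply a substitute, e.g.\ a convexity argument showing $\tfrac{d^2}{dn^2}\,\tfrac12\big(|n|^{2\alpha}+|n-n_0|^{2\alpha}\big)\gtrsim_\alpha 1$ on the relevant range. Second, the middle paragraph imports hyperbolic reasoning that is both unnecessary and wrong here: the set is bounded in $n$ so there is no infinite tail $K\sum(|n|^{2\alpha}-C)^{-1/2}$, convergence ``because $\alpha>1$'' is never used (the elliptic lemma holds for $\alpha=1$), and the inequality $a_n\gtrsim\tfrac12(|n|^{2\alpha}+|n-n_0|^{2\alpha})-C$ contradicts your own definition $a_n=\big(C-\tfrac12(\cdots)\big)_+$, which vanishes for those $n$. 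Your final ``cleanest route'' paragraph supersedes that material and, once the decomposition is restored, reproduces the paper's argument.
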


\begin{proof}
	As in Lemma \ref{medida-produto-alfa} it suffices to consider the case $\xi_0=0$. For all $K\ge 1$ notice that 
	\begin{equation}\label{proof-main-lemma-alfa-elip-a}
		\widetilde{\mathcal{G}}_K \subset \widetilde{\mathcal{G}}_{1,K} \cup  \widetilde{\mathcal{G}}_{2,K},
	\end{equation}
	where 
	\begin{align*}
		&\widetilde{\mathcal{G}}_{1,K} =  \Big\lbrace (\xi, n) \in \R\times \Z: C-|n|^{2\alpha}\leq \xi^2 \leq C+K -|n-n_0|^{2\alpha}\Big\rbrace,\\
		&\widetilde{\mathcal{G}}_{2,K} =  \Big\lbrace (\xi, n) \in \R\times \Z: C- |n-n_0|^{2\alpha}\leq \xi^2 \leq C+K -|n|^{2\alpha}\Big\rbrace.
	\end{align*}
	Indeed, $\widetilde{\mathcal{G}}_{1,K}$ contains the points of $\widetilde{\mathcal{G}}_K $ with $|n-n_0|\le |n|$ and  $\widetilde{\mathcal{G}}_{2,K}$  those that satisfy  $|n-n_0| > |n|$.
	
	Similar analysis as performed in the hyperbolic case shows that  
	\begin{equation}\label{proof-main-lemma-alfa-elip-01}
		\begin{split}
			\frak{m}(\widetilde{\mathcal{G}}_{1,K})&= 2\sum\limits_{|n-n_0| = 0}^{\lfloor (C+K)^{\frac1{2\alpha}}\rfloor}\sqrt{C+K - |n-n_0|^{2\alpha}}-2\sum\limits_{|n| = 0}^{\lfloor C^{\frac1{2\alpha}}\rfloor}\sqrt{C - |n|^{2\alpha}}\\
			&=2\sum\limits_{|n|= 0}^{\lfloor (C+K)^{\frac1{2\alpha}}\rfloor}\sqrt{C+K - |n|^{2\alpha}} -2\sum\limits_{|n| = 0}^{\lfloor C^{\frac1{2\alpha}}\rfloor}\sqrt{C - |n|^{2\alpha}}\\
			&:=\widetilde{S}_1 + \widetilde{S}_2,
		\end{split}
	\end{equation}
	where 
	\begin{align*}
		\widetilde{S}_1&=2\sum\limits_{|n| = 0}^{\lfloor C^{\frac1{2\alpha}}\rfloor}\Big(\sqrt{C+K - |n|^{2\alpha}} - \sqrt{C - |n|^{2\alpha}}\Big),\\
		\widetilde{S}_2&=2\sum\limits_{|n|=\lfloor C^{\frac1{2\alpha}}\rfloor +1 }^{\lfloor (C+K)^{\frac1{2\alpha}}\rfloor}\sqrt{C+K - |n|^{2\alpha}}.
	\end{align*}
	
	In order to estimate $	\widetilde{S}_1$ we use that 
	\begin{equation}\label{proof-main-lemma-alfa-elip-02}
		\begin{split}
			\widetilde{S}_1 &=  2\sum\limits_{|n| = 0}^{\lfloor C^{\frac1{2\alpha}}\rfloor}\frac{K}{\Big(\sqrt{C+K - |n|^{2\alpha}} + \sqrt{C - |n|^{2\alpha}}\Big)}\\
			&\le 2\sum\limits_{|n| = 0}^{\lfloor C^{\frac1{2\alpha}}\rfloor-1}\frac{K}{\sqrt{C - |n|^{2\alpha}}} +  \frac{2K}{\sqrt{C +K - \lfloor C^{\frac1{2\alpha}}\rfloor^{2\alpha}}}\\
			&\le 4 K \int_0^{C^{\frac1{2\alpha}}}\frac{dz}{\sqrt{C-z^{2\alpha}}} + 2\sqrt{K}.
		\end{split}
	\end{equation}
	Making now the change of variables  $z= C^{\frac{1}{2\alpha}}\rho^{\frac{1}{\alpha}}$ with $\alpha \ge 1$ we have from \eqref{proof-main-lemma-alfa-elip-02}
	\begin{equation}\label{proof-main-lemma-alfa-elip-03}
		\begin{split}
			\widetilde{S}_1 &\le  \frac{4K}{\alpha C^{\frac{\alpha -1}{2\alpha}}} \int_0^1\frac{d\rho}{\rho^{1-\frac{1}{\alpha}}\sqrt{1-\rho^2}} + 2\sqrt{K}\\
			&\lesssim  \frac{4K}{\alpha C^{\frac{\alpha -1}{2\alpha}}} \Big( \int_0^1\frac{d\rho}{\rho^{1-\frac{1}{\alpha}}} +  \int_0^1\frac{d\rho}{\sqrt{1-\rho^2}} \Big) + 2\sqrt{K}\\
			&\lesssim_{\alpha}K,
		\end{split}
	\end{equation}
where it has been  used that $C\ge 1$ and $\alpha\ge 1$. 
	
Now we proceed to estimate $\widetilde{S}_2$. First note that 
	\begin{equation}\label{proof-main-lemma-alfa-elip-04}
		\begin{split}
			\widetilde{S}_2& = 4\sum\limits_{n=\lfloor C^{\frac1{2\alpha}}\rfloor +1 }^{\lfloor (C+K)^{\frac1{2\alpha}}\rfloor}\sqrt{C+K - n^{2\alpha}}\\
			&\le  4\sqrt{K} + \int_{\lfloor C^{\frac1{2\alpha}}\rfloor +1}^{\lfloor (C+K)^{\frac1{2\alpha}}\rfloor}\sqrt{C+K - z^{2\alpha}}dz\\
			&\le  4\sqrt{K} + \int_{C^{\frac1{2\alpha}}}^{(C+K)^{\frac1{2\alpha}}}\sqrt{C+K - z^{2\alpha}}dz\\
			&\le  4\sqrt{K} +\sqrt{K}\big((C+K)^{\frac1{2\alpha}} -  C^{\frac1{2\alpha}}\big). 	
		\end{split}
	\end{equation}	
On the other hand, Lemma \ref{elem-inequality-lem-a} gives us that 
		\begin{equation}\label{proof-main-lemma-alfa-elip-05}
			(C+K)^{\frac1{2\alpha}} -  C^{\frac1{2\alpha}}\leq 	K^{\frac1{2\alpha}}
		\end{equation}
for all $C\ge 1$ and $\alpha\ge 1$. Combining \eqref{proof-main-lemma-alfa-elip-04} and \eqref{proof-main-lemma-alfa-elip-05} we have
	$$\widetilde{S}_2 \lesssim K,$$
then the proof is finished. 
\end{proof}

\begin{remark}\label{remark-alpha-small-elip} For the case $0< \alpha < 1$  the result stated in Lemma \ref{medida-produto-alfa-elip} fails. Indeed, it is not difficult to see that the
	$\widetilde{S}_1$ verifies  \eqref{proof-main-lemma-alfa-elip-03} in the following way 
	$$\widetilde{S}_1 \sim\Big( \frac{1}{C^{\frac{\alpha -1}{2\alpha}}}  +1\Big)K,$$
	where $\frac{1}{C^{\frac{\alpha -1}{2\alpha}}} \to + \infty$ as $C\to + \infty$ whenever  $0< \alpha < 1$. 
\end{remark}

In the same way as in the hyperbolic case, Lemma \ref{medida-produto-alfa-elip} implies the following result.

\begin{proposition}[elliptic bilinear estimate]\label{Prop-bilinear-alpha-elip}
	Let $u_1$ and $u_2$ be two functions on $L^2(\R\times\R\times\T)$ with the following support properties: 
	\begin{equation*}
		\text{supp}\,(\widehat{u_j}) \subseteq \mathcal{H}^+_{K_j},~~ j=1,2,
	\end{equation*}
	where
	\begin{equation}\label{Prop-bilinear-alpha-elip-a}
		\mathcal{H}^+_{K_j}:=\Big\{(\tau;\xi,n):\tfrac1{2}K_j\le \vab{\tau+\xi^2+|n|^{2\alpha}}\leq 2K_j\Big\}.
	\end{equation}
	Then we have the following inequality  
	\begin{equation}\label{Prop-bilinear-alpha-elip-b}
		\norm{u_1u_2}_{L^2(\R\times\R\times\mathbb{T})}\lesssim (K_1K_2)^{\frac12}~\norm{u_1}_{L^2(\R\times\R\times\mathbb{T})}~\norm{u_2}_{L^2(\R\times\R\times\T)}.
	\end{equation} 
\end{proposition}

\section{Strichartz Inequality on $\R\times \T$}

This section is devoted to the proof of Theorem ~\ref{Th-F-Strichartz}, which follows the same lines as previous proofs of related results in \cite{Takaoka-Tzvetkov} and we reproduce a sketch of it for the sake of completeness.  

\subsection{Proof of Theorem ~\ref{Th-F-Strichartz}--\eqref{F-Strichartz}} The first step is the proof of the following $L^4$ Strichartz estimate  in the 
Bourgain space $ X_{\alpha\pm}^{b,0}$.

\medskip
\noindent{\bf Step 1}. Let $\alpha\ge 1$ in the case (+), $\alpha > 1$ in the case (-) and $b>1/2$. Then we have
\begin{equation}\label{Prop-L4-Xb0-a}
	\norm{u}_{L^4(\R\times\R\times\mathbb{T})}\lesssim\norm{u}_{X_{\alpha\pm}^{b,0}(\R\times\R\times\mathbb{T})}
\end{equation}
for any $u\in X_{\alpha\pm}^{b,0}(\R\times\R\times\mathbb{T})$.

\medskip
\noindent
{\it Proof of Step 1}. Let $u$ a smooth function and consider the dyadic decomposition 
\begin{equation*}
	u(t; x, y)=\sum_{k=0}^{\infty}u_{2^k}(t; x, y), 
\end{equation*}
with $\text{supp}(\widehat{u_{2^k}}^{t,x,y}) \in \mathcal{H}^{\pm}_{2^k}$ defined in \eqref{Prop-bilinear-alpha-a}. Using Proposition \ref{Prop-bilinear-alpha},  Proposition \ref{Prop-bilinear-alpha-elip} and that $b>1/2$ one gets
\begin{equation}\label{Step 1-a}
	\begin{split}
		&\norm{u}^2_{L^4(\R\times\R\times\mathbb{T})}=\norm{uu}_{L^2(\R\times\R\times\mathbb{T})}\\ 
		&\qquad \leq \sum_{k_1=0}^{\infty}\sum_{k_2=0}^{\infty}\norm{u_{2^{k_1}}u_{2^{k_2}}}_{L^2(\R\times\R\times\mathbb{T})}\\ 
		&\qquad \lesssim \sum_{k_1=0}^{\infty}\sum_{k_2=0}^{\infty} (2^{k_1} 2^{k_2})^{1/2}\norm{\widehat{u_{2^{k_1}}}^{t,x,y}}_{L^2(\R\times\R\times\mathbb{T})}
		\norm{\widehat{u_{2^{k_2}}^{t,x,y}}}_{L^2(\R\times\R\times\mathbb{T})}.
	\end{split}
\end{equation}

\noindent 
Since $\text{supp}(\widehat{u_{2^k}}^{t,x,y}) \in \mathcal{H}^{\pm}_{2^k}$ we  have $\langle \tau + q_{\pm}(\xi, n)\rangle \ge |\tau + q_{\pm}(\xi, n)| \ge 2^{k-1}$.
Then, it follows that 
	\begin{equation}\label{Step 1-b}
		\begin{split}
			&\norm{\widehat{u_{2^{k_j}}}^{t,x,y}}_{L^2(\R\times\R\times\mathbb{T})}=\bigg(\int_{\tau}\int_{\xi}\sum\limits_{n\in \Z}
			\big|\widehat{u_{2^{k_j}}}^{t,x,y}(\tau;\xi, n)\big|^2d\tau d\xi\bigg)^{1/2}\\
			&\qquad \qquad \le \bigg(\int_{\tau}\int_{\xi}\sum\limits_{n\in \Z}\frac{\langle \tau + q_{\pm}(\xi, n)\rangle^{2b}}{2^{(k_j-1)2b}}
			\big|\widehat{u_{2^{k_j}}}^{t,x,y}(\tau;\xi, n)\big|^2d \tau d\xi\bigg)^{1/2}\\
			&\qquad \qquad \lesssim2^{-bk_j}\norm{u}_{X_{\alpha\pm}^{b,0}},
		\end{split}
   \end{equation}
for  $j=1,2$. So, inserting \eqref{Step 1-b} in \eqref{Step 1-a} and using that $b>1/2$, we have
\begin{equation}\label{Step 1-c}
	\begin{split}
		\norm{u}^2_{L^4(\R\times\R\times\mathbb{T})}
		&\lesssim \sum_{k_1=0}^{\infty}(2^{k_1})^{1/2-b}\sum_{k_2=0}^{\infty}(2^{k_2})^{1/2-b}\norm{u}^2_{X_{\alpha^{\pm}}^{b,0}}\\
		&\lesssim \norm{u}^2_{X_{\alpha\pm}^{b,0}},
	\end{split}
\end{equation}
as claimed in \eqref{Prop-L4-Xb0-a}.

\medskip 
\noindent{\bf Step 2}. Let $\delta >0$, $I=[-\delta, \delta]$ and $b>1/2$. Then, 
\begin{equation}\label{Prop-L4-Xb0-b}
	\norm{U^{\pm}_{\alpha}(t)\phi}_{L^4(I\times\R\times\T)}\lesssim (\delta^{1/2}+\delta^{1/2-b})\norm{\phi}_{L^2(\R\times\T)},
\end{equation}
for any $\phi \in  L^2(\R\times\T)$.

\medskip
\noindent
{\it Proof of Step 2}. Let $\psi\in C^\infty_0$ be a cut-off function such that $\mbox{supp~}(\psi) \subset(-2,2)$ and $\psi(t)\equiv 1$ on $[-1,1]$ and define  
$\psi_{\delta}(t):=\psi(\frac{t}{\delta})$. For $b>0$, one gets
$$\norm{\psi_{\delta}}_{H^b_t}\lesssim \delta^{1/2}\norm{\psi}_{L^2_t}+\delta^{1/2-b}\norm{\psi}_{\dot{H}^b_t}.$$
\noindent 
Now, applying \eqref{Prop-L4-Xb0-a} (here we need $b>1/2$), we have
\begin{equation*}
	\begin{split}
		\norm{U^{\pm}_{\alpha}(t)\phi}_{L^4(I\times\R\times\T)}&\lesssim \norm{\psi_{\delta}(t)U^{\pm}_{\alpha}(t)\phi}_{L^4(\R\times\R\times\T)}\\
		&\lesssim\norm{\psi_{\delta}(t)U^{\pm}_{\alpha}(t)\phi}_{X^{b,0}_{\alpha^{\pm}}(\R\times\R\times\mathbb{T})}\\
		&= \norm{\psi_{\delta}}_{H^b_t}\norm{\phi}_{L^2(\R\times\T)}\\
		&\lesssim (\delta^{1/2}+\delta^{1/2-b})\norm{\phi}_{L^2(\R\times\T)},
	\end{split}
\end{equation*}
so \eqref{Prop-L4-Xb0-b} is proved. 

Finally, the estimate \eqref{Prop-L4-Xb0-b}  can  be extended to an arbitrary interval $I$ in the same way as in \cite{Takaoka-Tzvetkov}. This completes the proof of Theorem ~\ref{Th-F-Strichartz}--\eqref{F-Strichartz}.

\subsection{Proof of Theorem ~\ref{Th-F-Strichartz}--\eqref{NH-Strichartz}} Consider the linear operator 
\[A: L^2(\R\times\mathbb{T}) \longrightarrow L^4(I\times\R\times\mathbb{T}),\]
defined by $A\phi:= U^{\pm}_{\alpha}(t)\phi$. Due to the estimate \eqref{F-Strichartz} we observe that $A$ is bounded. Let us denote by $A^*$ the adjoint operator of $A$, then
\begin{equation*}
	A^*(f)=\displaystyle\int_{I}U^{\pm}_{\alpha}(-t')f(t';\cdot)dt',
\end{equation*}
which is bounded from  $L^{4/3}( I\times\R\times\mathbb{T})$ to $L^2(\R\times\mathbb{T})$, i.e.,
\begin{equation*}
	\left\|\int_IU^{\pm}_{\alpha}(-t')f(t';\cdot)dt'\right\| _{L^2(\R\times\mathbb{T})}\leq C^*_I\left\| f\right\| _{L^{4/3}(I\times\R\times\mathbb{T})} 
\end{equation*}
for some constant $C^*_I$, depending only on the measure of $I$. Therefore,
\begin{equation*}
	AA^*=\displaystyle\int_{I}U^{\pm}_{\alpha}(t-t')f(t';\cdot)dt',
\end{equation*}
is bounded from $L^{4/3}( I\times\R\times\mathbb{T})$ to  $L^4(I\times\R\times\mathbb{T})$, satisfying 
\begin{equation*}
	\left\|\int_IU^{\pm}_{\alpha}(t-t')f(t';\cdot)dt'\right\| _{L^4(I\times\R\times\mathbb{T})}\leq C_IC^*_I\left\| f\right\| _{L^{4/3}(I\times\R\times\mathbb{T})}.
\end{equation*}
Finally, to complete the proof of  \eqref{NH-Strichartz} is used the arguments in Lemma 3.1 of \cite{Smith-Sogge}.

\subsection*{Acknowledgments}
This  research was partially supported by the Coordena\c{c}\~ao de Aperfei\c{c}oamento de Pessoal de N\'ivel Superior-Brasil (CAPES)-Finance Code 001. The first author was financed  by  CNPq/Brazil grant no. 307616/2020-7 
and ``Beatriz Galindo" research position at the University of C\'ordoba/Spain.

\end{document}